\newtheorem{theorem}{Theorem}[section]
\newtheorem{lemma}[theorem]{Lemma}
\newtheorem{proposition}[theorem]{Proposition}
\theoremstyle{remark}
\newtheorem{definition}[theorem]{Definition}
\newtheorem{remark}[theorem]{Remark}
\newtheorem{example}[theorem]{Example}
\numberwithin{equation}{section}
\thanks{The research of the first author was supported as a part of EUMLS project with grant agreement PIRSES$-$GA$-$2011$-$295164. The research of the second author was supported by a grant received from TUBITAK within 2221-Fellowship Programme for Visiting Scientists and Scientists on Sabbatical Leave}
\keywords{pretangent space to metric space, boundedness, local strong one-side porosity, metric space valued derivative}
\subjclass{Primary  54E35; Secondary  28A10, 28A05}
\begin{document}

\title[Uniform boundedness of pretangent spaces ...] 
{\vspace*{.1cm} Uniform boundedness of pretangent spaces, local constancy of metric derivatives and strong right upper porosity at a point}


\author[Viktoriia Bilet]{\noindent Viktoriia Bilet}
\email{biletvictoriya@mail.ru}
\address{\newline Institute of Applied Mathematics and Mechanics of NASU
\newline R. Luxemburg str. 74, Donetsk 83114,
\newline Ukraine
}

\author[Oleksiy Dovgoshey]{\noindent Oleksiy Dovgoshey}
\email{aleksdov@mail.ru}
\address{\newline Institute of Applied Mathematics and Mechanics of NASU
\newline R. Luxemburg str. 74, Donetsk 83114,
\newline Ukraine
}
\address{\newline Department of Mathematics
\newline Mersin University, Faculty of Art and Sciences,
\newline Mersin 33342,
\newline Turkey
}

\author[Mehmet K\"{u}\c{c}\"{u}kaslan]{\noindent Mehmet K\"{u}\c{c}\"{u}kaslan}
\email{mkucukaslan@mersin.edu.tr}
\address{\newline Department of Mathematics
\newline Mersin University, Faculty of Art and Sciences,
\newline Mersin 33342,
\newline Turkey
}


\begin{abstract}
 Let $(X,d,p)$ be a pointed metric space. A pretangent space to $X$ at $p$ is a metric space consisting of some equivalence classes of convergent to $p$ sequences $(x_n), x_n \in X,$ whose degree of convergence is comparable with a given scaling sequence $(r_n), r_n\downarrow 0.$ A scaling sequence $(r_n)$ is normal if this sequence is eventually decreasing and there is $(x_n)$ such that $\mid d(x_n,p)-r_n\mid=o(r_n)$ for $n\to\infty.$ Let $\mathbf{\Omega_{p}^{X}(n)}$ be the set of  pretangent spaces to $X$ at $p$  with normal scaling sequences.  We prove that $\mathbf{\Omega_{p}^{X}(n)}$ is uniformly bounded if and only if $\{d(x,p): x\in X\}$ is a so-called completely strongly porous set. It is also proved that the uniform boundedness of $\mathbf{\Omega_{p}^{X}(n)}$ is an equivalent of the constancy of metric derivatives of all metrically differentiable mappings on $X$ in the open balls of a fixed radius centered at the marked points of pretangent spaces.
\end{abstract}

\maketitle

\noindent\emph{\small This paper is dedicated to Prof. Matti Vuorinen on the occasion of his 65th birthday.}

\medskip

\noindent\emph{\small The research interests of Prof. Matti Vuorinen are hightly wide. In particular, it includes both the theory of metric spaces and the infinitesimal geometry. The present paper lies in the intersection of these theories. }


\section{Introduction}

Recent achievements in the metric space theory are closely related to some generalizations of the differentiation. A possible but not the only one initial point to develop the theory of a differentiation in metric spaces is the fact that every separable metric space admits an isometric embedding into the dual space of a separable Banach space. It provides a linear structure, and so a differentiation. This approach leads to a rather complete theory of rectifiable sets and currents on metric spaces \cite{ak1, ak2}. The concept of the upper gradient \cite{h, hk, sh}, Cheeger's notion of differentiability for Rademacher's theorem in certain metric measure spaces \cite{ch}, the metric derivative in the studies of metric space valued functions of bounded variation \cite{a1, a2} and the Lipshitz type approach in \cite{ha} are important examples of such generalizations. The generalizations of the differentiability mentioned above give usually nontrivial results only for the assumption that metric spaces have ``sufficiently many'' rectifiable curves.

A new sequential approach to the notion of differentiability for the mapping between the general metric spaces was produced  in \cite{dm1} (see also \cite{dm2}). The main idea of this approach is to introduce a ``metric derivative'' for the mapping between metric spaces by an intrinsic way which does not depend on the possible linear structure in these spaces even if such structures is given. A basic technical tool in \cite{dm1} is pretangent and tangent spaces to an arbitrary metric space $X$ at a point $p.$ The development of the theory of differentiation in metric spaces without linear structure requires the understanding of interrelations between the infinitesimal properties of initial metric space and geometry of pretangent spaces to this initial.

It is almost clear that the boundedness of pretangent spaces to a metric space $(X, d)$ at a point $p\in X$ is closely related to the porosity of the set $S_{p}(X)=\{d(x,p): x\in X\}$ at the point zero. Another not so obvious hypothesis is that the uniform boundedness of all pretangent spaces (to $X$ at $p$) leads to the ``constancy of metric derivatives'' of all ``metrically differentiable'' mappings on $X$ near distinguished points of these pretangent spaces.

What type of porosity of $S_{p}(X)$ at $0$ corresponds to the occasion?

The main result of the paper, Theorem~\ref{th3.10}, gives a complete answer to this question. Furthermore, in Proposition~\ref{Pr5.4}, after a formal definition of ``metric derivatives'', we also obtain a positive answer to the hypothesis formulated above.


Recall some results and terminology related to the pretangent spaces.

Let $(X,d,p)$ be a pointed metric space with a metric $d$ and a marked point $p.$ Fix a
sequence $\tilde{r}$ of positive real numbers $r_n$ tending to zero. In what follows
$\tilde{r}$ will be called a \emph{scaling sequence}. Let us denote by $\tilde{X}$
the set of all sequences of points from X.
\begin{definition}\label{D1.1} Two sequences $\tilde{x}=(x_n)_{n\in \mathbb N}$ and $\tilde{y}=(y_n)_{n\in \mathbb
N},$ $\tilde{x}, \tilde{y} \in \tilde{X}$ are mutually stable with respect to
$\tilde{r}=(r_n)_{n\in \mathbb N}$ if the finite limit
\begin{equation}\label{eq1.2}
\lim_{n\to\infty}\frac{d(x_n,y_n)}{r_n}:=\tilde{d}_{\tilde{r}}(\tilde{x},\tilde{y})=\tilde{d}(\tilde{x},\tilde{y})\end{equation} exists.\end{definition}
We shall say that a family $\tilde{F}\subseteq\tilde{X}$ is \emph{self-stable} (w.r.t.
$\tilde{r}$) if any two $\tilde{x}, \tilde{y} \in \tilde{F}$ are mutually stable. A
family $\tilde{F}\subseteq\tilde{X}$ is \emph{maximal self-stable} if $\tilde{F}$ is
self-stable and for an arbitrary $\tilde{z}\in \tilde{X}$ either $\tilde{z}\in\tilde{F}$
or there is $\tilde{x}\in\tilde{F}$ such that $\tilde{x}$ and $\tilde{z}$ are not
mutually stable.

A standard application of Zorn's lemma leads to the following
\begin{proposition}\label{Pr1.2}Let $(X,d,p)$ be a pointed metric space. Then for every scaling sequence $\tilde{r}=(r_n)_{n\in \mathbb N}$ there exists a maximal self-stable family $\tilde{X}_{p,\tilde{r}}$ such that $\tilde{p}:=(p,p,...)~\in~\tilde{X}_{p,\tilde{r}}.$
\end{proposition}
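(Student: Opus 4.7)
The plan is a textbook Zorn's lemma argument; the only real content is setting up the poset correctly and checking that the union of a chain stays self-stable.

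First I would introduce the collection
\[
\mathcal{P} := \{\tilde{F} \subseteq \tilde{X} : \tilde{F} \text{ is self-stable w.r.t. } \tilde{r} \text{ and } \tilde{p} \in \tilde{F}\},
\]
partially ordered by inclusion. The poset $\mathcal{P}$ is non-empty: the singleton $\{\tilde{p}\}$ belongs to $\mathcal{P}$ since the limit in \eqref{eq1.2} with $\tilde{x}=\tilde{y}=\tilde{p}$ equals $0$. (In fact $\{\tilde{p}, \tilde{x}\}$ lies in $\mathcal{P}$ for any sequence $\tilde{x}$ with $d(x_n,p)/r_n$ convergent in $[0,\infty)$, but the singleton already suffices for existence.)

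Next I would verify the chain condition. Let $\mathcal{C} \subseteq \mathcal{P}$ be a totally ordered subfamily and set $\tilde{F}^* := \bigcup_{\tilde{F}\in\mathcal{C}} \tilde{F}$. Clearly $\tilde{p} \in \tilde{F}^*$. To check self-stability, pick arbitrary $\tilde{x},\tilde{y} \in \tilde{F}^*$; then $\tilde{x} \in \tilde{F}_1$ and $\tilde{y} \in \tilde{F}_2$ for some $\tilde{F}_1,\tilde{F}_2 \in \mathcal{C}$, and by total order one of them contains the other, say $\tilde{F}_1 \subseteq \tilde{F}_2$. Since $\tilde{F}_2$ is self-stable, the limit $\lim_{n\to\infty} d(x_n,y_n)/r_n$ exists in $[0,\infty)$, so $\tilde{x}$ and $\tilde{y}$ are mutually stable. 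Hence $\tilde{F}^* \in \mathcal{P}$ is an upper bound for $\mathcal{C}$.

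By Zorn's lemma, $\mathcal{P}$ contains a maximal element $\tilde{X}_{p,\tilde{r}}$. It remains to confirm that this maximality in $\mathcal{P}$ coincides with the definition of a maximal self-stable family given before the statement. If not, there would exist some $\tilde{z} \in \tilde{X} \setminus \tilde{X}_{p,\tilde{r}}$ that is mutually stable with every element of $\tilde{X}_{p,\tilde{r}}$; then $\tilde{X}_{p,\tilde{r}} \cup \{\tilde{z}\}$ would be a strictly larger self-stable family containing $\tilde{p}$, contradicting maximality in $\mathcal{P}$. Thus $\tilde{X}_{p,\tilde{r}}$ is the required maximal self-stable family.

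There is no genuine obstacle here; the only point requiring care is the verification that unions of chains remain self-stable, which uses total ordering in the essential way so that any pair of sequences lands inside a single self-stable member of the chain.
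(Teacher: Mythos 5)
Your proof is correct and is exactly the ``standard application of Zorn's lemma'' that the paper invokes without writing out: the poset of self-stable families containing $\tilde{p}$, the chain-union check, and the final verification that Zorn-maximality in that poset yields maximality among all self-stable families. Nothing is missing.
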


Note that the condition $\tilde{p}\in\tilde{X}_{p,\tilde{r}}$ implies the equality
$\mathop{\lim}\limits_{n\to\infty}d(x_n,p)=0 $ for every $\tilde{x}=(x_n)_{n\in \mathbb N}\in\tilde{X}_{p,\tilde{r}}.$

Consider a function $\tilde{d}:\tilde{X}_{p,\tilde{r}}\times\tilde{X}_{p,\tilde{r}}\rightarrow\mathbb R$ where
$\tilde{d}(\tilde{x},\tilde{y})=\tilde{d}_{\tilde{r}}(\tilde{x},\tilde{y})$ is defined
by \eqref{eq1.2}. Obviously, $\tilde{d}$ is symmetric and nonnegative. Moreover, the
triangle inequality for $d$ implies
$$\tilde{d}(\tilde{x},\tilde{y})\leq\tilde{d}(\tilde{x},\tilde{z})+\tilde{d}(\tilde{z},\tilde{y})$$
for all $\tilde{x},\tilde{y},\tilde{z}\in\tilde{X}_{p,\tilde{r}}.$ Hence $(\tilde{X}_{p,\tilde{r}},\tilde{d})$
is a pseudometric space.
\begin{definition}\label{D1.3} A pretangent space to the space X (at the point p w.r.t. $\tilde{r}$) is the metric identification of a pseudometric space
$(\tilde{X}_{p,\tilde{r}},\tilde{d}).$\end{definition}

Since the notion of pretangent space is basic for the paper, we remind this metric
identification construction.

Define a relation $\sim$ on $\tilde{X}_{p,\tilde{r}}$ as $\tilde x\sim \tilde y$ if and only if
$\tilde d(\tilde x, \tilde y)=0.$ Then $\sim$ is an equivalence relation. Let us denote
by $\Omega_{p,\tilde r}^{X}$ the set of equivalence classes in $\tilde{X}_{p,\tilde{r}}$ under the
equivalence relation $\sim.$ It follows from general properties of the pseudometric spaces
(see, for example, \cite{k}) that if $\rho$ is defined on $\Omega_{p,\tilde
r}^{X}$ as \begin{equation} \label{eq1.4}\rho(\gamma,\beta):=\tilde d (\tilde x, \tilde
y)\end{equation}for $\tilde x\in \gamma$ and $\tilde y\in \beta,$ then $\rho$ is a
well-defined metric on $\Omega_{p,\tilde r}^{X}.$ By definition, the metric
identification of $(\tilde{X}_{p,\tilde{r}}, \tilde d)$ is the metric space $(\Omega_{p,\tilde
r}^{X}, \rho).$

It should be observed that $\Omega_{p,\tilde r}^{X}\ne \varnothing$ because the constant
sequence $\tilde p$ belongs to $\tilde{X}_{p,\tilde{r}}.$ Thus every pretangent space
$\Omega_{p, \tilde r}^{X}$ is a pointed metric space with the natural distinguished point
$\alpha=\pi (\tilde p),$ (see diagram~\eqref{eq1.5} below).

Let $(n_k)_{k\in\mathbb N}$ be an infinite, strictly increasing sequence of natural
numbers. Let us denote by $\tilde r'$ the subsequence $(r_{n_k})_{k\in \mathbb N}$ of
the scaling sequence $\tilde r=(r_n)_{n\in\mathbb N}$ and let $\tilde
x':=(x_{n_k})_{k\in\mathbb N}$ for every $\tilde x=(x_n)_{n\in\mathbb N}\in\tilde
X.$ It is clear that if $\tilde x$ and $\tilde y$ are mutually stable w.r.t. $\tilde r,$
then $\tilde x'$ and $\tilde y'$ are mutually stable w.r.t. $\tilde r'$ and
$\tilde d_{\tilde r}(\tilde x, \tilde y)=\tilde d_{\tilde r'}(\tilde x', \tilde
y').$ If $\tilde X_{p,\tilde r}$ is a maximal self-stable (w.r.t $\tilde
r$) family, then, by Zorn's Lemma, there exists a maximal self-stable (w.r.t $\tilde
r'$) family $\tilde X_{p,\tilde r'}$ such that $$\{\tilde x':\tilde x \in \tilde
X_{p,\tilde r}\}\subseteq \tilde X_{p,\tilde r'}.$$ Denote by $in_{\tilde r'}$ the map
from $\tilde X_{p,\tilde r}$ to $\tilde X_{p,\tilde r'}$ with $in_{\tilde r'}(\tilde
x)=\tilde x'$ for all $\tilde x\in\tilde X_{p,\tilde r}.$ It follows from \eqref{eq1.4}
that after metric identifications $in_{\tilde r'}$ passes to an isometric embedding
$em':\Omega_{p,\tilde r}^{X}~\rightarrow~\Omega_{p,\tilde r'}^{X}$ under which the
diagram
\begin{equation} \label{eq1.5}
\begin{array}{ccc}
\tilde X_{p, \tilde r} & \xrightarrow{\ \ \mbox{\emph{in}}_{\tilde r'}\ \ } &
\tilde X_{p, \tilde r^{\prime}} \\
\!\! \!\! \!\! \!\! \! \pi\Bigg\downarrow &  & \! \!\Bigg\downarrow \pi^{\prime}
\\
\Omega_{p, \tilde r}^{X} & \xrightarrow{\ \ \mbox{\emph{em}}'\ \ \ } & \Omega_{p, \tilde
r^{\prime}}^{X}
\end{array}
\end{equation}is commutative. Here $\pi$ and
$\pi'$ are the natural projections, $$\pi(\tilde x):=\{\tilde y \in \tilde X_{p,\tilde
r}: \tilde d_{\tilde r}(\tilde x, \tilde y)=0\} \quad \mbox{and} \quad \pi'(\tilde x):=\{\tilde y \in
\tilde X_{p,\tilde r'}: \tilde d_{\tilde r'}(\tilde x, \tilde y)=0\}.$$

Let $X$ and $Y$ be metric spaces. Recall that a map $f:X\rightarrow Y$ is called an
\emph{isometry} if $f$ is distance-preserving and onto.

\begin{definition}\label{D1.4}A pretangent $\Omega_{p,\tilde
r}^{X}$ is tangent if $em':\Omega_{p,\tilde r}^{X}\rightarrow \Omega_{p,\tilde r'}^{X}$
is an isometry for every~$\tilde r'.$\end{definition}

The following lemmas will be used in sections 3 and 4 of the paper.

\begin{lemma}\label{Lem1.6}\emph{\cite{dak}}
Let $\mathbf{\mathfrak{B}}$ be a countable subfamily
of $\tilde X$ and let $\tilde\rho=(\rho_n)_{n\in\mathbb N}$ be a scaling sequence.
Suppose that the inequality $$\limsup_{n\to\infty}\frac{d(b_n,p)}{\rho_n}<\infty$$ holds for every $\tilde b=(b_n)_{n\in\mathbb N}\in\mathbf{\mathfrak{B}}.$ Then there is an infinite
subsequence $\tilde \rho'$ of $\tilde \rho$ such that the family
$\mathbf{\mathfrak{B'}}=\{\tilde b': \tilde b \in
\mathbf{\mathfrak{B'}}\}$ is self-stable w.r.t. $\tilde \rho'.$
\end{lemma}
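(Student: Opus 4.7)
The proof plan is a standard countable diagonal extraction. First, I would enumerate the hypothesis: write $\mathbf{\mathfrak{B}} = \{\tilde b^{(1)}, \tilde b^{(2)}, \ldots\}$ with $\tilde b^{(k)} = (b^{(k)}_n)_{n\in\mathbb{N}}$, and fix an enumeration $\{P_m\}_{m\in\mathbb{N}}$ of the countable set of unordered pairs $\{i,j\}\subseteq\mathbb{N}$, writing $P_m=\{i_m,j_m\}$. The aim is to produce a single subsequence $\tilde\rho'$ of $\tilde\rho$ such that, for every pair $P_m$, the ratio $d(b^{(i_m)}_n, b^{(j_m)}_n)/\rho_n$ converges along $\tilde\rho'$.

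The first step provides the boundedness input needed to apply Bolzano--Weierstrass. By the triangle inequality,
$$\frac{d(b^{(i)}_n, b^{(j)}_n)}{\rho_n} \leq \frac{d(b^{(i)}_n, p)}{\rho_n} + \frac{d(b^{(j)}_n, p)}{\rho_n},$$
and the hypothesis that each $\limsup_{n\to\infty} d(b^{(k)}_n, p)/\rho_n$ is finite implies that for every fixed pair $\{i,j\}$ the real sequence $\bigl(d(b^{(i)}_n, b^{(j)}_n)/\rho_n\bigr)_{n\in\mathbb{N}}$ is bounded from some index on.

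Next I would inductively build nested subsequences $\tilde\rho\supseteq\tilde\rho^{(1)}\supseteq\tilde\rho^{(2)}\supseteq\ldots$: having chosen $\tilde\rho^{(m-1)}$, extract $\tilde\rho^{(m)}$ as a subsequence along which the ratio for the pair $P_m$ converges (possible by Bolzano--Weierstrass and the boundedness above). Then form the diagonal $\tilde\rho'=(\rho^{(m)}_m)_{m\in\mathbb{N}}$. For each fixed $m$, the tail of $\tilde\rho'$ from index $m$ onward is a subsequence of $\tilde\rho^{(m)}$, so the ratio corresponding to $P_m$ converges along $\tilde\rho'$; since every unordered pair of indices occurs as some $P_m$, this shows that any two members of the extracted family $\mathbf{\mathfrak{B}}'=\{\tilde b':\tilde b\in\mathbf{\mathfrak{B}}\}$ are mutually stable w.r.t. $\tilde\rho'$, which is precisely self-stability.

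There is no substantive obstacle beyond careful bookkeeping of the diagonal construction. The only minor point to verify is that $\tilde\rho'$ still qualifies as a scaling sequence, i.e.\ is an infinite sequence of positive reals tending to $0$; this is automatic since any infinite subsequence of a scaling sequence is again a scaling sequence.
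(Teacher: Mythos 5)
Your proof is correct: the paper does not reproduce an argument for this lemma but only cites it from \cite{dak}, and the standard proof there is exactly what you describe --- the triangle-inequality bound $d(b^{(i)}_n,b^{(j)}_n)/\rho_n\le d(b^{(i)}_n,p)/\rho_n+d(b^{(j)}_n,p)/\rho_n$ giving eventual boundedness for each pair, followed by a Cantor diagonal extraction over the countably many pairs. The bookkeeping points you flag (strict monotonicity of the diagonal indices and the fact that a subsequence of a scaling sequence is again a scaling sequence) are indeed the only details to check, and both go through.
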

The next lemma follows from Corollary 3.3 of \cite{dov}.

\begin{lemma}\label{Lem1.7}
Let $(X,d)$ be a metric space and let $Y, Z$ be dense subsets of $X.$ Then for every $p\in Y \cap Z$ and every $\Omega_{p, \tilde r}^{Y}$ there are $\Omega_{p, \tilde r}^{Z}$ and an isometry $f:\Omega_{p, \tilde r}^{Y}\rightarrow \Omega_{p, \tilde r}^{Z}$ such that $f(\alpha_Y)=\alpha_Z$ where $\alpha_Y$ and $\alpha_Z$ are the marked points of $\Omega_{p, \tilde r}^{Y}$ and $\Omega_{p, \tilde r}^{Z}$ respectively.
\end{lemma}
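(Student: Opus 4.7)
The plan is to construct $\tilde{Z}_{p,\tilde{r}}$ from the given $\tilde{Y}_{p,\tilde{r}}$ by approximating each sequence in $\tilde{Y}_{p,\tilde{r}}$ by a sequence in $Z$, and then verify that the resulting correspondence descends to a marked-point-preserving isometry of the metric identifications.

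Using density of $Z$ in $X$, for every $\tilde{y} = (y_n) \in \tilde{Y}_{p,\tilde{r}}$ I would pick $\tilde{z}^{\tilde{y}} = (z_n) \in \tilde{Z}$ with $d(y_n, z_n) < r_n/n$, taking $\tilde{z}^{\tilde{p}} := \tilde{p}$ (possible since $p \in Z$). Set $\tilde{F} := \{\tilde{z}^{\tilde{y}} : \tilde{y} \in \tilde{Y}_{p,\tilde{r}}\}$. For any $\tilde{y}_1, \tilde{y}_2 \in \tilde{Y}_{p,\tilde{r}}$ with images $\tilde{z}_1, \tilde{z}_2$, the triangle inequality gives
$$\bigl| d(z_{1,n}, z_{2,n}) - d(y_{1,n}, y_{2,n}) \bigr| \leq d(y_{1,n}, z_{1,n}) + d(y_{2,n}, z_{2,n}) < \frac{2 r_n}{n},$$
so $\tilde{F}$ is self-stable with respect to $\tilde{r}$, contains $\tilde{p}$, and satisfies $\tilde{d}_{\tilde{r}}(\tilde{z}_1, \tilde{z}_2) = \tilde{d}_{\tilde{r}}(\tilde{y}_1, \tilde{y}_2)$. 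By Zorn's lemma (as in Proposition~\ref{Pr1.2}) I would extend $\tilde{F}$ to a maximal self-stable family $\tilde{Z}_{p,\tilde{r}} \supseteq \tilde{F}$ with $\tilde{p} \in \tilde{Z}_{p,\tilde{r}}$. The correspondence $\tilde{y} \mapsto \tilde{z}^{\tilde{y}}$ then descends via \eqref{eq1.4} to a well-defined isometric embedding $f : \Omega_{p,\tilde{r}}^Y \to \Omega_{p,\tilde{r}}^Z$ with $f(\alpha_Y) = \alpha_Z$.

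The main obstacle is surjectivity of $f$, which forces the symmetric use of density of $Y$ in $X$. Given $\tilde{w} = (w_n) \in \tilde{Z}_{p,\tilde{r}}$, density of $Y$ yields $\tilde{y} = (y_n) \in \tilde{Y}$ with $d(w_n, y_n) < r_n/n$; note that $\lim_n d(y_n,p) = 0$ since $\lim_n d(w_n,p) = 0$. For each $\tilde{y}_0 \in \tilde{Y}_{p,\tilde{r}}$, the image $\tilde{z}^{\tilde{y}_0}$ lies in $\tilde{Z}_{p,\tilde{r}}$ and is therefore mutually stable with $\tilde{w}$; a parallel triangle-inequality estimate transfers mutual stability to $\tilde{y}$ and $\tilde{y}_0$. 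By maximality of $\tilde{Y}_{p,\tilde{r}}$ we conclude $\tilde{y} \in \tilde{Y}_{p,\tilde{r}}$, and its image $\tilde{z}^{\tilde{y}}$ satisfies $d(z^{\tilde{y}}_n, w_n) \leq d(z^{\tilde{y}}_n, y_n) + d(y_n, w_n) = o(r_n)$, so $\pi'(\tilde{z}^{\tilde{y}}) = \pi'(\tilde{w})$ and the class of $\tilde{w}$ lies in the image of $f$.

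The delicate point is not any single estimate but the balance between the two density hypotheses: density of $Z$ in $X$ is what allows the construction of $f$, while density of $Y$ in $X$ is what makes $f$ surjective, so neither assumption can be dispensed with. The axiom-of-choice selection $\tilde{y} \mapsto \tilde{z}^{\tilde{y}}$ is non-canonical, but different choices yield sequences in the same $\sim$-class, so the induced $f$ on the metric identification is independent of the selection.
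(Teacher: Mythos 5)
Your proof is correct. Note, however, that the paper itself gives no argument for Lemma~\ref{Lem1.7}: it simply derives the statement from Corollary~3.3 of \cite{dov}, so there is no internal proof to compare against. Your direct density-approximation argument is sound and self-contained: the choice $d(y_n,z_n)<r_n/n$ makes all perturbations $o(r_n)$, which preserves every limit $\tilde d_{\tilde r}(\cdot,\cdot)$ exactly, so the transferred family $\tilde F$ is self-stable, contains $\tilde p$ (since $p\in Z$), and extends by Zorn's lemma to a maximal $\tilde Z_{p,\tilde r}$; the induced map on metric identifications is then a distance-preserving, marked-point-preserving embedding. Your surjectivity argument is the genuinely nontrivial part and is handled correctly: approximating an arbitrary $\tilde w\in\tilde Z_{p,\tilde r}$ by $\tilde y\in\tilde Y$ within $r_n/n$, you verify mutual stability of $\tilde y$ with every member of $\tilde Y_{p,\tilde r}$ by comparing against the already-placed images $\tilde z^{\tilde y_0}$, invoke maximality of $\tilde Y_{p,\tilde r}$ to conclude $\tilde y\in\tilde Y_{p,\tilde r}$, and then observe $\tilde d_{\tilde r}(\tilde z^{\tilde y},\tilde w)=0$. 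This correctly uses both density hypotheses, one for existence of the embedding and one for surjectivity, and your closing remark that the isometry is independent of the (choice-dependent) selection is also right, since any two selections differ by $o(r_n)$ and hence land in the same $\sim$-class. The one presentational caveat is that the lemma only asserts the existence of \emph{some} $\Omega_{p,\tilde r}^{Z}$, which your construction supplies; you are not claiming (and need not claim) the statement for an arbitrary prescribed maximal family in $\tilde Z$.
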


\section{Completely strongly porous sets}

The notion of ``porosity'' for the first time appeared in some early works of Denjoy \cite{d1},  \cite{d2} and Khintchine \cite{chi} and then arose independently in the study of cluster sets in 1967 by Dol\v{z}enko \cite{dol}. A useful collection of facts related to the notion of porosity can be found, for example, in \cite{fh}, \cite{hv}, \cite{t} and \cite{tk}. The porosity appears naturally in many problems and plays an implicit role in various regions of analysis (e. g., the cluster sets \cite{z1}, the Julia sets \cite{pr}, the quasisymmetric maps \cite{va},  the differential theory \cite{kps}, the theory of generalized subharmonic functions \cite{dr} and so on). The reader can also consult \cite{z2} and \cite{z3} for more information.

Let us recall the definition of the right upper porosity. Let $E$ be a subset of $\mathbb R^{+}=[0,\infty).$
\begin{definition}\emph{\cite{t}}\label{D1}
The right upper porosity of $E$ at 0 is the quantity
\begin{equation}\label{L1}
p^{+}(E,0):=\limsup_{h\to 0^{+}}\frac{\lambda(E,0,h)}{h}
\end{equation}
where $\lambda(E,0,h)$ is the length of the largest open subinterval of $(0,h),$ which could be the empty set $\varnothing,$ that
contains no points of $E$. The set $E$ is strongly porous at 0 if $p^{+}(E,0)=1.$
\end{definition}
Let $\tilde \tau=(\tau_n)_{n\in\mathbb N}$ be a sequence of real numbers. We shall say
that $\tilde \tau$ is eventually decreasing if the inequality $\tau_{n+1}\le\tau_{n}$ holds
for sufficiently large $n.$ Write $\tilde E_{0}^{d}$ for the set of eventually decreasing
sequences $\tilde \tau$ with $\mathop{\lim}\limits_{n\to\infty}\tau_{n}=0$ and
$\tau_{n}\in E\setminus \{0\}$ for $n\in\mathbb N.$

Define $\tilde I_{E}^{d}$ to be the set of sequences of open intervals $(a_n,b_n)\subseteq
\mathbb R^{+}, n\in\mathbb N,$ meeting the conditions:

\bigskip
$\bullet$ \emph{Each $(a_n, b_n)$ is a connected component of the set $Ext E=Int(\mathbb
R^{+}\setminus E),$ i.e., $(a_n,b_n)\cap E=\varnothing$ but  $$((a,b)\ne (a_n, b_n))\Rightarrow((a,b)\cap E \ne
\varnothing)$$} \emph{for every}
$(a,b)\supseteq(a_n,b_n);$

$\bullet$ $(a_n)_{n\in\mathbb N}$ \emph{is eventually decreasing};

\emph{$\bullet$
$\mathop{\lim}\limits_{n\to\infty}a_{n}=0$ and
$\mathop{\lim}\limits_{n\to\infty}\frac{b_n-a_n}{b_n}=1.$}

\bigskip

Define also an equivalence $\asymp$ on the set of sequences of strictly positive
numbers as follows. Let $\tilde a=(a_n)_{n\in\mathbb N}$ and
$\tilde{\gamma}=(\gamma_n)_{n\in\mathbb N}.$ Then $\tilde a \asymp \tilde {\gamma}$ if
there are some constants $c_1, c_2
>0$ such that
$
c_1 a_n < \gamma_n < c_2 a_n
$
for $n\in\mathbb N.$

The next definition is an equivalent  form of Definition 1.4 from \cite{bd2}.
\begin{definition}\label{D2*}
Let $E\subseteq\mathbb R^{+}$ and let $\tilde \tau
\in \tilde E_{0}^{d}.$ The set $E$ is $\tilde \tau$-strongly porous at 0 if there is a
sequence $\{(a_n, b_n)\}_{n\in\mathbb N}\in\tilde I_{E}^{d}$ such that
$
\tilde\tau \asymp \tilde a
$
where $\tilde a=(a_n)_{n\in\mathbb N}.$ The set $E$ is completely strongly porous
at 0 if $E$ is $\tilde \tau$-strongly porous at 0 for every $\tilde \tau \in \tilde
E_{0}^{d}.$
\end{definition}

We denote by \textbf{\emph{CSP}} the set of all completely strongly porous at 0 subsets of $\mathbb R^{+}$. It is clear that every $E\in\textbf{\emph{CSP}}$ is strongly porous at 0 but not conversely. Moreover, if 0 is an isolated point of $E\subseteq\mathbb R^{+},$ then $E\in \textbf{\emph{CSP}}.$

The next lemma immediately follows from Definition~\ref{D2*}.

\begin{lemma}\label{Lem2.3}
Let $E\subseteq\mathbb R^{+},$
$\tilde\gamma\in\tilde E_{0}^{d},$  $\{(a_n, b_n)\}_{n\in\mathbb N}\in\tilde I_{E}^{d}$
and let $\tilde a=(a_n)_{n\in\mathbb N}.$ The equivalence
$\tilde\gamma\asymp\tilde a$ holds if and only if we have
\begin{equation*}\label{L2*}
\limsup_{n\to\infty}\frac{a_n}{\gamma_n}<\infty \quad\mbox{and}\quad \gamma_n \le a_n
\end{equation*} for sufficiently large $n.$
\end{lemma}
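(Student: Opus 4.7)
The plan is a direct unwinding of the two definitions, using the structural constraints that $\tilde{\gamma}\in\tilde{E}_0^d$ and $\{(a_n,b_n)\}\in\tilde{I}_E^d$ provide; the key observation is that the points $\gamma_n$ lie in $E$ while the intervals $(a_n,b_n)$ are disjoint from $E$, so $\gamma_n$ is forced to one side of each such interval, and the side $\gamma_n\ge b_n$ is ruled out by $b_n/a_n\to\infty$.

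For the forward direction, I would assume $\tilde{\gamma}\asymp\tilde{a}$, i.e., there exist $c_1,c_2>0$ with $c_1 a_n<\gamma_n<c_2 a_n$ for all $n$. The bound $\limsup_{n\to\infty}a_n/\gamma_n\le 1/c_1<\infty$ is immediate. For the second inequality, note that $\gamma_n\in E$ and $(a_n,b_n)\cap E=\varnothing$, whence $\gamma_n\le a_n$ or $\gamma_n\ge b_n$. The condition $(b_n-a_n)/b_n\to 1$ is equivalent to $a_n/b_n\to 0$; if $\gamma_n\ge b_n$ held for infinitely many $n$, then $\gamma_n/a_n\ge b_n/a_n\to\infty$, contradicting $\gamma_n/a_n<c_2$. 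Hence $\gamma_n\le a_n$ for all sufficiently large $n$.

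For the converse, assume $M:=\limsup_{n\to\infty}a_n/\gamma_n<\infty$ and $\gamma_n\le a_n$ for $n\ge N_0$. Then for $n\ge N_1\ge N_0$ large enough, $a_n/\gamma_n<M+1$, so
\begin{equation*}
\frac{1}{M+1}\,a_n<\gamma_n\le a_n<2a_n.
\end{equation*}
To obtain constants valid for all $n\in\mathbb{N}$ (as required by the definition of $\asymp$), I absorb the finitely many exceptional indices $n<N_1$ by setting
\begin{equation*}
c_1:=\min\Bigl\{\tfrac{1}{M+2},\ \tfrac{1}{2}\min_{n<N_1}\tfrac{\gamma_n}{a_n}\Bigr\},\qquad
c_2:=\max\Bigl\{2,\ 2\max_{n<N_1}\tfrac{\gamma_n}{a_n}\Bigr\},
\end{equation*}
which are well defined and strictly positive because each $a_n,\gamma_n>0$. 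With these constants $c_1 a_n<\gamma_n<c_2 a_n$ holds for every $n$, proving $\tilde{\gamma}\asymp\tilde{a}$.

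The only substantive step is the ``wrong side'' exclusion in the forward direction; everything else is bookkeeping with constants. I expect no real obstacle, since the definition of $\tilde{I}_E^d$ was engineered exactly so that $a_n/b_n\to 0$ rules out the possibility $\gamma_n\ge b_n$ when $\gamma_n/a_n$ is bounded.
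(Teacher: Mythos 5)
Your proof is correct; the paper offers no argument for this lemma (it merely asserts that it ``immediately follows'' from Definition~2.2), and your direct verification --- bounding $a_n/\gamma_n$ by $1/c_1$, using $\gamma_n\in E$ and $(a_n,b_n)\cap E=\varnothing$ together with $b_n/a_n\to\infty$ to exclude the side $\gamma_n\ge b_n$, and absorbing the finitely many exceptional indices into the constants $c_1,c_2$ for the converse --- is exactly the intended one. No gaps.
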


Define a set $\mathbb N_{N_1}$ as $\{N_1, N_1+1,...\}$ for $N_1\in\mathbb N.$

\begin{definition}\label{univ} Let $$\tilde A =\{(a_n,b_n)\}_{n\in\mathbb
N}~\in~\tilde I_{E}^{d}\quad\mbox{and}\quad \tilde L =\{(l_n,m_n)\}_{n\in\mathbb
N}~\in~\tilde I_{E}^{d}.$$ Write $\tilde A\preceq\tilde L$ if there are $N_1 \in\mathbb N$ and $f:~\mathbb N_{N_1}~\rightarrow~\mathbb N$ such that
$ a_n = l_{f(n)}$ for every $n\in\mathbb N_{N_1}.$
$\tilde L$ is universal if $\tilde B \preceq \tilde
L$ holds for every $\tilde B\in\tilde I_{E}^{d}.$
\end{definition}

Let $\tilde L=\{(l_n, m_n)\}_{n\in\mathbb N}\in\tilde I_{E}^{d}$ be universal and let
\begin{equation}\label{L13}
M(\tilde L):=\limsup_{n\to\infty}\frac{l_n}{m_{n+1}}.
\end{equation}

In what follows $ac E$ means the set of all accumulation points of a set $E.$
\begin{theorem}\emph{\cite{bd2}}\label{ImpTh}
Let $E\subseteq\mathbb R^{+}$ be strongly porous at 0 and $0\in ac E.$ Then $E\in \textbf{CSP}$ if and only if there is an universal $\tilde L\in \tilde I_{E}^{d}$ such that $M(\tilde L)<\infty.$
\end{theorem}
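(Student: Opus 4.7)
The plan is to prove the two implications separately. The backward direction is direct: use the universal $\tilde L$ to pair each scale $\tau_n$ with a component $(l_{k(n)},m_{k(n)})$ of $\tilde L$. The forward direction is constructive: enumerate components of $\mathrm{Ext}\,E$ near $0$ to form $\tilde L$, then use CSP to force $M(\tilde L)<\infty$.

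For the backward direction, suppose $\tilde L=\{(l_n,m_n)\}_{n\in\mathbb N}$ is universal with $M(\tilde L)\le K<\infty$, and fix $\tilde\tau\in\tilde E_0^d$. For each sufficiently large $n$, let $k(n)$ be the largest index with $\tau_n\le l_{k(n)}$. Then $\tau_n>l_{k(n)+1}$, and since $\tau_n\in E$ while $(l_{k(n)+1},m_{k(n)+1})$ is disjoint from $E$, we obtain $\tau_n\ge m_{k(n)+1}$, so
\[
m_{k(n)+1}\le\tau_n\le l_{k(n)}\le\frac{l_{k(n)}}{m_{k(n)+1}}\,\tau_n\le K\tau_n
\]
for $n$ large. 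Setting $(a_n,b_n):=(l_{k(n)},m_{k(n)})$, Lemma~\ref{Lem2.3} delivers $\tilde\tau\asymp\tilde a$. That $\{(a_n,b_n)\}_{n\in\mathbb N}\in\tilde I_E^d$ follows from $k(n)\to\infty$ (so $a_n\downarrow0$ and $(b_n-a_n)/b_n\to1$ transfer from $\tilde L$) together with eventual monotonicity of $(a_n)$ inherited from $(\tau_n)$. Hence $E$ is $\tilde\tau$-strongly porous, and $E\in\textbf{CSP}$.

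For the forward direction, assume $E\in\textbf{CSP}$. The plan is to choose $h_0>0$ such that the left endpoints of components of $\mathrm{Ext}\,E$ within $(0,h_0)$ accumulate only at $0$, and let $\tilde L=\{(l_n,m_n)\}$ be their decreasing enumeration. Strong porosity at $0$ yields $(m_n-l_n)/m_n\to1$, so $\tilde L\in\tilde I_E^d$; universality is immediate because any $\tilde A\in\tilde I_E^d$ has $a_n<h_0$ eventually, hence every such $a_n$ is already a left endpoint listed in $\tilde L$. For $M(\tilde L)<\infty$, argue by contradiction: if $l_{n_k}/m_{n_k+1}\to\infty$ along some subsequence, pick $\tau_k\in E\cap(m_{n_k+1},l_{n_k})$ close to the geometric mean of the endpoints, and thin the indices so that $(\tau_k)$ is eventually decreasing, producing a $\tilde\tau\in\tilde E_0^d$ for which both $l_{n_k}/\tau_k\to\infty$ and $\tau_k/m_{n_k+1}\to\infty$. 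By CSP there is $\{(a_k,b_k)\}\in\tilde I_E^d$ with $\tilde a\asymp\tilde\tau$; but $a_k$ is a left endpoint listed in $\tilde L$, so $a_k\notin(m_{n_k+1},l_{n_k})$, forcing $a_k\le m_{n_k+1}$ or $a_k\ge l_{n_k}$, and each inequality violates $\tilde a\asymp\tilde\tau$.

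The main obstacle is the preliminary step in the forward direction: justifying that such an $h_0$ exists. I expect this to follow from a parallel contradiction---if left endpoints of components clustered at some sequence of positive points $c_j\to0$, the components near each $c_j$ would have $(m-l)/m\to0$, and feeding a $\tilde\tau$ built from the $c_j$'s into CSP would yield a purported witness whose components near $c_j$ cannot satisfy the required ratio condition, contradicting CSP. A secondary technicality is arranging, in the $M(\tilde L)<\infty$ step, that the chosen $\tau_k$ really form an element of $\tilde E_0^d$ (eventually decreasing with $\tau_k\in E\setminus\{0\}$), which may require a further geometric thinning of the subsequence $(n_k)$.
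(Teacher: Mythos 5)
This theorem is quoted from \cite{bd2}; the paper gives no proof of it, so there is no in-paper argument to compare against, and your attempt must be judged on its own. Your backward implication is essentially sound: for any $\tilde L\in\tilde I_E^d$ with $M(\tilde L)<\infty$ and any $\tilde\tau\in\tilde E_0^d$, taking $k(n)$ maximal with $\tau_n\le l_{k(n)}$ does sandwich $\tau_n$ between $m_{k(n)+1}$ and $l_{k(n)}$, and since $k(n)\to\infty$ and $k(n)$ is eventually nondecreasing, $\{(l_{k(n)},m_{k(n)})\}_{n}$ lies in $\tilde I_E^d$ and Lemma~\ref{Lem2.3} gives $\tilde\tau\asymp\tilde a$. (Note that this half never uses universality of $\tilde L$, only $M(\tilde L)<\infty$; the only cosmetic slip is that $M(\tilde L)\le K$ yields $l_n/m_{n+1}\le K+\varepsilon$ eventually, not $\le K$.)

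The forward implication has a genuine gap, and it sits exactly where the substance of the theorem lies: the construction of the universal element. You take $\tilde L$ to be the decreasing enumeration of \emph{all} components of $Ext\,E$ below some $h_0$ and assert that strong porosity at $0$ forces $(m_n-l_n)/m_n\to1$ along this enumeration. That is false: strong porosity supplies a \emph{sequence} of relatively long gaps, not that every gap is long. Concretely, let $x_n=2^{-n^2}$ and $E=\{x_n:n\in\mathbb N\}\cup\{x_n(1+\tfrac1n):n\in\mathbb N\}$. Then $E$ is strongly porous at $0$, $0\in ac\,E$, and $E\in \textbf{\emph{CSP}}$ (for any $\tilde\tau\in\tilde E_0^d$ one may take $a_m=x_{k(m)}(1+\tfrac1{k(m)})$, the left endpoint of the long gap reaching up to $x_{k(m)-1}$, so that $\tau_m\le a_m\le 2\tau_m$), yet the full enumeration of components contains the short gaps $\bigl(x_n,\,x_n(1+\tfrac1n)\bigr)$ whose relative length tends to $0$, so your $\tilde L$ is not an element of $\tilde I_E^d$ at all; the actual universal element $\{(x_n(1+\tfrac1n),\,x_{n-1})\}_{n}$ must \emph{skip} the short components. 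Your preliminary step can fail too: decorating this $E$ with points accumulating from above at each $x_n(1+\tfrac1n)$ keeps it in $\textbf{\emph{CSP}}$ while making the left endpoints of components accumulate at infinitely many positive points, so no $h_0$ of the kind you posit exists. Once $\tilde L$ no longer lists every component, both your universality claim (``every $a_n$ of a competitor is already listed'') and your contradiction argument for $M(\tilde L)<\infty$ (which needs $E$ dense in $[m_{n+1},l_n]$, i.e.\ that nothing was skipped between consecutive listed gaps) lose their footing. A correct proof must select only those gaps that can actually occur in members of $\tilde I_E^d$ and then establish separately that this selection belongs to $\tilde I_E^d$, is universal, and has finite $M$; that is the real content of the theorem in \cite{bd2}, and your construction does not recover it.
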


Note that the quantity $M(\tilde L)$ depends from the set $E$ only (for details see \cite{bd2}).
The following lemma is used in next part of the paper.

\begin{lemma}\label{Pr5}\emph{\cite{bd2}}
Let $E\subseteq\mathbb R^{+}$ and let
$\tilde \tau=(\tau_n)_{n\in\mathbb N}\in\tilde E_{0}^{d}.$ Then $E$ is $\tilde\tau$-strongly porous at 0 if and only if there is a constant $k\in (1, \infty)$ such that for every $K\in (k, \infty)$ there exists $N_{1}(K)\in \mathbb N$ with
$ (k\tau_n, K\tau_n)\cap E = \varnothing$ for every $n
\ge N_1(K).$
\end{lemma}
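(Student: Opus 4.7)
The plan is to prove the equivalence by combining the geometric description from Lemma~\ref{Lem2.3} --- namely that $\tilde\tau\asymp\tilde a$ amounts to $\tau_n\le a_n$ for large $n$ together with $\limsup_n a_n/\tau_n<\infty$ --- with the divergence $b_n/a_n\to\infty$ which is forced by $(b_n-a_n)/b_n\to 1$ in the definition of $\tilde I_E^{d}$.

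For the forward direction, I would start from a witnessing family $\{(a_n,b_n)\}\in\tilde I_E^{d}$ with $\tilde\tau\asymp\tilde a$ and pick $k$ to be any constant larger than the upper $\asymp$-constant, so that $a_n\le k\tau_n$ eventually. Since $(b_n-a_n)/b_n\to 1$ gives $a_n/b_n\to 0$, and since $\tau_n\le a_n$ eventually by Lemma~\ref{Lem2.3}, I obtain $b_n/\tau_n\to\infty$. Hence, for any $K>k$, eventually $a_n\le k\tau_n$ and $K\tau_n\le b_n$, so $(k\tau_n,K\tau_n)\subseteq(a_n,b_n)\subseteq\mathbb{R}^{+}\setminus E$, which is the required interval condition.

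For the converse, I would fix one $c>k$ (say $c=2k$) and, for each $n\ge N_1(c)$, let $(a_n,b_n)$ be the connected component of $\mathrm{Ext}\,E$ containing the gap interval $(k\tau_n,c\tau_n)$. Several of the needed properties then fall out immediately: $a_n\le k\tau_n$ is built in; applying the hypothesis with arbitrarily large $K>k$ forces $b_n\ge K\tau_n$ eventually, whence $a_n/b_n\le k/K$ for large $n$, so $a_n/b_n\to 0$ (equivalently $(b_n-a_n)/b_n\to 1$) and $a_n\to 0$; and finally $\tau_n\le a_n$ holds because $\tau_n\in E$, $\tau_n<k\tau_n<b_n$, and $(a_n,b_n)\cap E=\varnothing$. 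Combined with $a_n\le k\tau_n$, Lemma~\ref{Lem2.3} then delivers $\tilde\tau\asymp\tilde a$.

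The step I expect to be the main obstacle is the remaining requirement that $(a_n)$ be eventually decreasing, as needed for membership in $\tilde I_E^{d}$. I plan to establish it through the identification $a_n=\sup(\overline{E}\cap[0,k\tau_n])$, which is valid because the components of $\mathrm{Ext}\,E$ are precisely the maximal open subintervals of $\mathbb{R}^{+}\setminus\overline{E}$. Since the map $t\mapsto\sup(\overline{E}\cap[0,t])$ is non-decreasing and $k\tilde\tau$ is eventually decreasing, $(a_n)$ is automatically eventually non-increasing. A short auxiliary observation that both $\tilde\tau$-strong porosity and the definition of $\tilde I_E^{d}$ depend on $E$ only through $\overline{E}$ (together with $\tau_n\in E\subseteq\overline{E}$) keeps the above identification valid in full generality, completing the verification that $\{(a_n,b_n)\}\in\tilde I_E^{d}$.
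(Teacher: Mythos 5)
Your argument is correct. Note first that the paper itself gives no proof of this lemma --- it is imported verbatim from \cite{bd2} --- so there is no internal proof to compare against; your write-up is a legitimate self-contained derivation. The forward direction is clean: from $\tilde\tau\asymp\tilde a$ and Lemma~\ref{Lem2.3} you get $\tau_n\le a_n<k\tau_n$ eventually (with $k$ just above $\limsup a_n/\tau_n\ge 1$, so indeed $k>1$), and $a_n/b_n\to 0$ combined with $\tau_n\le a_n$ gives $b_n/\tau_n\to\infty$, so $(k\tau_n,K\tau_n)\subseteq(a_n,b_n)$ eventually for each fixed $K$. The converse is also sound: the open interval $(k\tau_n,K\tau_n)$ is disjoint from $E$ and overlaps the component $(a_n,b_n)$ you chose, hence is contained in it, which yields $b_n\ge K\tau_n$ for all large $n$ and thus $(b_n-a_n)/b_n\to 1$; the inequalities $\tau_n\le a_n\le k\tau_n$ follow exactly as you say, and Lemma~\ref{Lem2.3} closes the loop. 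Your treatment of the one genuinely delicate point --- eventual monotonicity of $(a_n)$ --- via $a_n=\sup(\overline{E}\cap[0,k\tau_n])$ is valid: since $\tau_n\le a_n$ forces $a_n>0$, the maximality clause in the definition of a component of $\mathrm{Ext}\,E$ puts $a_n\in\overline{E}$, while $(a_n,b_n)$, being open and disjoint from $E$, is disjoint from $\overline{E}$, so $a_n$ is indeed the largest point of $\overline{E}$ not exceeding $k\tau_n$, and monotonicity of $t\mapsto\sup(\overline{E}\cap[0,t])$ together with the eventual decrease of $\tilde\tau$ finishes it. Two cosmetic points you may wish to make explicit: the intervals are only defined for $n\ge N_1(2k)$, so one should fix the indexing for small $n$ (all conditions in $\tilde I_E^{d}$ are asymptotic, so this is harmless); and the strict inequalities in the definition of $\asymp$ are required for all $n$, which is recovered from ``eventually'' statements by adjusting the constants $c_1,c_2$, as implicitly done in Lemma~\ref{Lem2.3}.
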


Let us consider now a simple set belonging to \emph{\textbf{CSP}}.

\begin{example}\label{ex4.4.7}
Let $(x_n)_{n\in\mathbb N}$ be strictly decreasing sequence of positive real numbers with $\mathop{\lim}\limits_{n\to\infty}\frac{x_{n+1}}{x_n}=0.$ Define a set $W$ as $$W=\{x_n: n\in\mathbb N\},$$ i.e., $W$ is the range of the sequence $(x_n)_{n\in\mathbb N}.$ Then $W\in \textbf{\emph{CSP}}$ and $\tilde L=\{(x_{n+1}, x_n)\}_{n\in\mathbb N}\in\tilde I_{W}^{d}$ is universal with $M(\tilde L)=1.$
\end{example}

\begin{proposition}\label{pr4.4.8}
 Let $E\subseteq\mathbb R^{+}.$ Then the inclusion
\begin{equation}\label{4.4.22}
\{E\cup A: A\in \textbf{CSP}\}\subseteq \textbf{CSP}
\end{equation}
holds if and only if $0$ is an isolated point of $E.$
\end{proposition}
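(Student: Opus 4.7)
The proof splits into two implications. For the forward direction (``$0$ isolated in $E$'' $\Rightarrow$ inclusion) my plan is to exploit that $E$ stays away from $0$, so every relevant sequence eventually lands in $A$ and the $A \in \textbf{CSP}$ hypothesis takes over. For the reverse direction I plan to build a counter-example $A$ whose points sit at scale-ratios dense in $(1,\infty)$ relative to a thinned-out subsequence of some $\tilde\tau \in \tilde E_0^d$; the rapid thinning will make $A \in \textbf{CSP}$ while the density of the ratios will destroy $\tilde\sigma$-strong porosity of $E\cup A$ for this thinned subsequence.

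\emph{Forward direction.} Suppose $E \cap (0,\delta) = \varnothing$ for some $\delta > 0$. Given $A\in \textbf{CSP}$ and $\tilde\tau = (\tau_n) \in \widetilde{E\cup A}_0^d$, note that $\tau_n < \delta$ eventually, so $\tau_n \in A\setminus\{0\}$ eventually and a tail of $\tilde\tau$ belongs to $\tilde A_0^d$. Applying Lemma~\ref{Pr5} to $A$ produces $k>1$ such that $(k\tau_n,K\tau_n) \cap A = \varnothing$ eventually for every $K>k$; these intervals also eventually lie inside $(0,\delta)$ and are therefore disjoint from $E$. A final application of Lemma~\ref{Pr5} gives $E\cup A \in \textbf{CSP}$.

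\emph{Backward direction.} Suppose $0 \in \operatorname{ac} E$ and pick $\tilde\tau = (\tau_n) \in \tilde E_0^d$. Fix an enumeration $(q_k)_{k\in\mathbb N}$ of $\mathbb Q\cap(1,\infty)$, so that $\{q_k\}$ is dense in $(1,\infty)$. I construct indices $n_1 < n_2 < \cdots$ by induction: having chosen $n_k$, pick $n_{k+1} > n_k$ so large that
\[
\tau_{n_{k+1}} \;<\; \frac{q_k}{q_{k+1}(k+1)}\,\tau_{n_k},
\]
which is possible since $\tau_n \to 0$. Setting $a_k := q_k \tau_{n_k}$ then yields $a_{k+1}/a_k < 1/(k+1)$, so $(a_k)$ is strictly decreasing with ratio going to zero, and Example~\ref{ex4.4.7} gives $A := \{a_k : k\in\mathbb N\} \in \textbf{CSP}$. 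To establish $E\cup A \notin \textbf{CSP}$ I use $\tilde\sigma := (\tau_{n_k})_{k\in\mathbb N}$, which is an eventually decreasing subsequence of $\tilde\tau$ lying in $E\setminus\{0\}$ and therefore in $\widetilde{E\cup A}_0^d$. Given any $k^* > 1$, set $K := k^* + 1$; by density of $\{q_k\}$ in $(1,\infty)$ there are infinitely many $k$ with $q_k \in (k^*,K)$, and for every such $k$ the point $a_k = q_k \sigma_k$ lies in $(k^*\sigma_k,K\sigma_k)\cap(E\cup A)$. Hence this intersection is nonempty infinitely often, so by Lemma~\ref{Pr5} the set $E\cup A$ is not $\tilde\sigma$-strongly porous and does not belong to $\textbf{CSP}$.

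The main obstacle I foresee is the backward construction: the two conditions on $A$ pull in opposite directions, since $A \in \textbf{CSP}$ forces sparse placement (the geometric decay condition of Example~\ref{ex4.4.7}) while failure of CSP for $E\cup A$ requires $A$ to realize a dense family of scale-ratios against some sequence in $E$. The reconciliation is to decouple the dense scale factors $q_k$ (fixed once and for all) from the indices $n_k$, which can be made arbitrarily sparse along $\tilde\tau$ at each step to absorb the variation of $q_k/q_{k+1}$ and force $a_{k+1}/a_k \to 0$.
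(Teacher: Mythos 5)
Your proof is correct and follows essentially the same strategy as the paper: the forward direction is routine, and for the converse you build a sparse auxiliary set $A\in\textbf{\emph{CSP}}$ (via Example~\ref{ex4.4.7}) whose points realize a cofinal family of ratios against a rapidly thinned subsequence of a sequence in $E$, so that Lemma~\ref{Pr5} fails for $E\cup A$. The only difference is cosmetic --- you place the points of $A$ at rational multiples above the reference sequence and test porosity along the subsequence of $E$, whereas the paper places them at dyadic fractions below and tests along the new points --- and you additionally write out the forward direction that the paper omits.
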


\begin{remark}\label{rem4.4.9}
Inclusion \eqref{4.4.22} means that $E\cup A\in \textbf{\emph{CSP}}$ for every $A\in \textbf{\emph{CSP}}.$
\end{remark}

\noindent {\bf Proof of Proposition~\ref{pr4.4.8}.} If $0\notin ac E,$ then \eqref{4.4.22} follows almost directly and we omit the details here. Suppose $0\in ac E.$  Then there is a sequence $(\tau_n)_{n\in\mathbb N}$ such that $\tau_n\in E$ and $\tau_{n+1}\le 2^{-n^{2}}\tau_n$ for every $n\in\mathbb N.$ Let $M_{1}, M_2,..., M_{k},...$ be an infinite partition of $\mathbb N,$ $$\mathop{\cup}\limits_{k=1}^{\infty}M_{k}=\mathbb N, \, M_{i}\cap M_{j}=\varnothing \,\mbox{if} \, i\ne j$$ such that card$M_{k}=$card$\mathbb N$ for every $k$ and $\nu(1)<\nu(2)<...<\nu(k)...$ where \begin{equation}\label{4.4.23}\nu(k)=\mathop{\min}\limits_{n\in M_k}n.\end{equation} Let $n\in\mathbb N$ and let $m(n)$ be the index such that $n\in M_{m(n)}.$ For every $n\in\mathbb N$ define $\tau_n^{*}$ as $2^{-m(n)}\tau_n.$ Write
\begin{equation*}
E_{1}=\{\tau_n: n\in\mathbb N\} \quad\mbox{and}\quad E_{1}^{*}=\{\tau_{n}^{*}: n\in\mathbb N\}.
\end{equation*}
Using Lemma~\ref{Pr5} we can show that $E_{1}\cup E_{1}^{*}$ is not $\tau^{*}$-strongly porous with $\tilde\tau^{*}=(\tau_n)_{n\in\mathbb N}.$ Consequently $E_{1}\cup E_{1}^{*} \notin \textbf{\emph{CSP}}.$ It implies that $E \cup E_{1}^{*} \notin \textbf{\emph{CSP}}$ because $E_{1}\subseteq E.$ To complete the proof, it suffices to show that $E_{1}^{*}\in \emph{\textbf{CSP}}.$ To this end, we note that \eqref{4.4.23} and the inequalities $\nu(1)<\nu(2)<...<\nu(k)...$ imply that $m(n)\le n$ for every $n\in\mathbb N.$ Indeed, if $m(n)=k,$ then we have $$n\ge\nu(k)=(\nu(k)-\nu(k-1))+(\nu(k-1)-\nu(k-2))+...+ (\nu(2)-\nu(1))+\nu(1)$$ $$\ge (k-1)+\nu(1)=k=m(n).$$ Consequently $$\tau_{n}^{*}=2^{-m(n)}\tau_n\ge 2^{-n}\tau_n\ge2^{-n^{2}}\tau_n \ge\tau_{n+1}\ge\tau_{n+1}^{*}$$ for every $n\in\mathbb N.$ It follows from that $$\lim_{n\to\infty}\frac{\tau_{n}^{*}}{\tau_{n+1}^{*}}\ge\lim_{n\to\infty}\frac{2^{-n}\tau_n}{2^{-n^{2}}\tau_n }=\lim_{n\to\infty}2^{n^{2}-n}=+\infty.$$
Thus, as in Example~\ref{ex4.4.7}, we have $E_{1}^{*}\in \textbf{\emph{CSP}}.$ $\qquad\qquad\qquad\qquad\qquad\qquad\blacksquare$


\section{Uniform boundedness and \textbf{\emph{CSP}}}

Let $\mathfrak F=\{(X_{i}, d_{i}): i \in I\}$ be a nonempty family
of  metric spaces. The family $\mathfrak F$ is \emph{uniformly bounded} if there is a constant
$c>0$ such that the inequality
$
\textrm{diam}X_i <c
$
holds for every $i\in I.$ If all metric spaces $(X_i,d_i)\in\mathfrak
{F}$ are pointed with marked points $p_i\in X_i,$ then the uniform boundedness of
$\mathfrak {F}$ can be described by the next way. Define
\begin{equation}\label{L4.1}
\rho^{*}(X_i):=\sup_{x\in X_i}d_{i}(x,p_i) \quad \mbox{and} \quad R^{*}(\mathfrak {F}):=\sup_{i\in I}\rho^{*}(X_i).
\end{equation}
The family $ \mathfrak {F}$ is uniformly bounded if and only if
$
R^{*}(\mathfrak {F})<\infty.
$

\begin{proposition}\label{Pr4.1}
Let $(X,d,p)$ be a pointed metric space and let $\mathbf{\Omega_{p}^{X}}$ be the set of all pretangent spaces to $X$ at $p.$ The following statements are equivalent.
\newline $\mathrm{(i_1)}$ \textit{The family $\mathbf{\Omega_{p}^{X}}$ is uniformly bounded.}
\newline $\mathrm{(i_2)}$ \textit{The point $p$ is an isolated point of $X$.}
\end{proposition}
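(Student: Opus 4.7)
The plan is to prove the two implications separately, with the implication $(\mathrm{i_2}) \Rightarrow (\mathrm{i_1})$ being essentially trivial and $(\mathrm{i_1}) \Rightarrow (\mathrm{i_2})$ handled by contraposition via an explicit construction that rescales a sequence accumulating at $p$.

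For $(\mathrm{i_2}) \Rightarrow (\mathrm{i_1})$, if $p$ is isolated in $X$, pick $\delta > 0$ such that the open ball $B(p,\delta) = \{p\}$. Recall that the condition $\tilde p \in \tilde X_{p,\tilde r}$ forces $d(x_n,p) \to 0$ for every $\tilde x \in \tilde X_{p,\tilde r}$. Hence for any $\tilde x = (x_n)_{n\in\mathbb N} \in \tilde X_{p,\tilde r}$, eventually $x_n = p$, whence $\tilde d(\tilde x,\tilde p) = 0$, and similarly $\tilde d(\tilde x,\tilde y) = 0$ for any $\tilde x, \tilde y \in \tilde X_{p,\tilde r}$. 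Therefore every pretangent space $\Omega_{p,\tilde r}^{X}$ reduces to the single point $\alpha = \pi(\tilde p)$, so $\rho^{*}(\Omega_{p,\tilde r}^{X}) = 0$ for all $\tilde r$, and in particular $R^{*}(\mathbf{\Omega_p^{X}}) = 0 < \infty$.

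For $(\mathrm{i_1}) \Rightarrow (\mathrm{i_2})$, I argue the contrapositive: if $p$ is not isolated, I exhibit, for each $C > 0$, a pretangent space containing a point at distance at least $C$ from its marked point. Since $p$ is an accumulation point, there is a sequence $(y_n)_{n\in\mathbb N}$ in $X \setminus \{p\}$ with $d(y_n,p) \to 0$. Define the scaling sequence $\tilde r = (r_n)_{n\in\mathbb N}$ by $r_n := d(y_n,p)/C$; then $r_n > 0$ and $r_n \downarrow 0$ (along a subsequence, if necessary; pass to one to make this hold). The sequence $\tilde y = (y_n)$ and the constant sequence $\tilde p$ satisfy
\[
\lim_{n\to\infty} \frac{d(y_n,p)}{r_n} = C,
\]
so $\{\tilde p,\tilde y\}$ is self-stable with respect to $\tilde r$ and $\tilde d_{\tilde r}(\tilde p,\tilde y) = C$. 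By a direct application of Zorn's lemma (the same argument that yields Proposition~\ref{Pr1.2}, but starting from $\{\tilde p,\tilde y\}$ instead of $\{\tilde p\}$), there exists a maximal self-stable family $\tilde X_{p,\tilde r}$ containing both $\tilde p$ and $\tilde y$. The corresponding pretangent space $\Omega_{p,\tilde r}^{X}$ then contains the distinct points $\alpha = \pi(\tilde p)$ and $\pi(\tilde y)$, and $\rho(\alpha,\pi(\tilde y)) = C$. Hence $\rho^{*}(\Omega_{p,\tilde r}^{X}) \ge C$, and since $C > 0$ was arbitrary, $R^{*}(\mathbf{\Omega_p^{X}}) = \infty$, so $\mathbf{\Omega_p^{X}}$ is not uniformly bounded.

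There is no real obstacle: the only point that needs a word of care is the Zorn extension including $\tilde y$, which is immediate once one verifies that $\{\tilde p,\tilde y\}$ is itself self-stable, as above. The subsequence extraction (to guarantee $r_n$ is strictly decreasing, if one wants scaling sequences of that form) is also routine.
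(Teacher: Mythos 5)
Your proposal is correct and follows essentially the same route as the paper: the isolated-point direction is immediate (every pretangent space degenerates to the single point $\alpha$), and the converse is proved by contraposition using the rescaled scaling sequence $r_n = d(y_n,p)/C$ to force $\tilde d_{\tilde r}(\tilde p,\tilde y)=C$ and then extending $\{\tilde p,\tilde y\}$ to a maximal self-stable family by Zorn's lemma. The only cosmetic difference is that the paper writes the parameter as $t$ and concludes via $\mathrm{diam}\,\Omega_{p,\tilde r_t}^{X}\ge t$ rather than via $\rho^{*}$; your subsequence remark is harmless since scaling sequences need only tend to zero.
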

\begin{proof}The implication $\mathrm{(i_2)} \Rightarrow \mathrm{(i_1)}$ follows directly from the definitions. To prove $\mathrm{(i_1)} \Rightarrow \mathrm{(i_2)}$ suppose that $p\in ac X.$  Let $\tilde x=(x_n)_{n\in\mathbb N}\in\tilde X$ be a sequence of
distinct points of X such that $\mathop{\lim}\limits_{n\to\infty}d(x_{n},p)~=~0.$ For
$t>0$ define the scaling sequence $\tilde r_{t}=(r_{n,t})_{n\in\mathbb N}$ with
$r_{n,t}=\frac{d(x_n,p)}{t}.$ It follows at once from Definition 1.1 that $\tilde x$ and
$\tilde p$ are mutually stable w.r.t $\tilde r_{t}$ and
\begin{equation}\label{L4.4}
\tilde d_{\tilde r_{t}}(\tilde x, \tilde p)=t.
\end{equation}
Let $\tilde X_{p,\tilde r_{t}}$ be a maximal self-stable family meeting the relation
$\tilde x\in\tilde X_{p,\tilde r_{t}}.$ Equality \eqref{L4.4} implies the inequality
$$\textrm{diam} \, \Omega_{p,\tilde r_{t}}^{X}\ge t,$$ where $\Omega_{p,\tilde r_{t}}^{X}=\pi(\tilde X_{p,\tilde
r_{t}}).$ Consequently the family $\mathbf{\Omega_{p}^{X}}$ is not uniformly bounded.
The implication $\mathrm{(i_1)} \Rightarrow \mathrm{(i_2)}$ follows.
\end{proof}

The proposition above shows that the question on the uniform boundedness can be
informative only for some special subfamilies of $\mathbf{\Omega_{p}^{X}}.$ We can
narrow down the family $\mathbf{\Omega_{p}^{X}}$ by the way of consideration some special
scaling sequences.

\begin{definition}\label{D4.2}
Let $(X,d,p)$ be a pointed metric space and let $p\in ac X.$ A
scaling sequence $(r_{n})_{n\in\mathbb N}$ is normal if $(r_n)_{n\in\mathbb N}$ is eventually decreasing and there is $
(x_{n})_{n\in\mathbb N}\in\tilde X$ such that
\begin{equation}\label{L4.4*}
\lim_{n\to\infty}\frac{d(x_n,p)}{r_{n}}=1.
\end{equation}
\end{definition}

\begin{proposition}\label{prop}The following statements hold for every pointed metric space $(X,d,p).$
\newline $(i_1)$ If $\Omega_{p,\tilde r}^{X}$ contains at least two distinct points, then there are $c>0$ and a subsequence $(r_{n_k})_{k\in\mathbb N}$ of $\tilde r$ so that the sequence $(cr_{n_k})_{k\in\mathbb N}$ is normal.
\newline $(i_2)$ If $(x_n)_{n\in\mathbb N}\in\tilde X$ and \eqref{L4.4*} holds, then there is an infinite increasing sequence $(n_k)_{k\in\mathbb N}$ so that $(d(x_{n_k}, p))_{k\in\mathbb N}$ is decreasing.
\newline $(i_3)$ If $\tilde r$ is a normal scaling sequence,then there is $(x_n)_{n\in\mathbb N}\in\tilde X$ such that \eqref{L4.4*} holds and $(d(x_n, p))_{n\in\mathbb N}$ is eventually decreasing.
\end{proposition}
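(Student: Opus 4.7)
The plan is to handle the three parts of Proposition~\ref{prop} in turn, each via a subsequence or running-minimum construction.

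For $(i_1)$: since $\Omega_{p,\tilde r}^{X}$ contains the marked point $\alpha = \pi(\tilde p)$ and, by hypothesis, at least one other point $\gamma$, I take a representative $\tilde x = (x_n)_{n\in\mathbb N} \in \tilde X_{p,\tilde r}$ of $\gamma$; then $c := \tilde d(\tilde x, \tilde p) = \rho(\gamma,\alpha) > 0$. By Definition~\ref{D1.1} this gives $d(x_n,p)/r_n \to c$, hence $d(x_n,p)/(cr_n) \to 1$. The only obstruction to normality of $(cr_n)_{n\in\mathbb N}$ is eventual monotonicity, which I arrange by extracting from the sequence of positive reals $r_n \to 0$ a strictly decreasing subsequence $(r_{n_k})_{k\in\mathbb N}$. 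The scaling sequence $(c r_{n_k})_{k\in\mathbb N}$ is then normal, witnessed by $(x_{n_k})_{k\in\mathbb N}$.

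For $(i_2)$: the hypothesis $d(x_n,p)/r_n \to 1$ together with $r_n \to 0$ gives $d(x_n,p) \to 0$, and eventually $d(x_n,p) > 0$ (otherwise some subsequence of the ratio would tend to $0$). A greedy extraction from the positive tail of $(d(x_n,p))$ then produces a strictly decreasing subsequence $(d(x_{n_k}, p))_{k \in \mathbb N}$.

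For $(i_3)$: reindexing is not allowed, so a different idea is required. Let $(y_n)$ witness the normality of $\tilde r$ and set $a_n := d(y_n,p)$. Choose $N_1$ large enough that $(r_n)_{n\ge N_1}$ is decreasing. Define the running minimum $b_n := \min\{a_k : N_1 \le k \le n\}$, which is weakly decreasing and satisfies $b_n = a_{k(n)}$ for some $k(n) \in [N_1, n]$. Setting $x_n := y_{k(n)}$ for $n \ge N_1$ (and arbitrary for $n < N_1$) gives $d(x_n,p) = b_n$, which is eventually decreasing. The essential verification is $b_n/r_n \to 1$: the bound $b_n \le a_n$ yields $\limsup b_n/r_n \le 1$, and for any $\varepsilon > 0$ I pick $N_\varepsilon \ge N_1$ so that $a_k \ge (1-\varepsilon) r_k$ for $k \ge N_\varepsilon$; then the ``tail'' $\min_{N_\varepsilon \le k \le n} a_k \ge (1-\varepsilon) r_n$ because $(r_k)$ is decreasing on $[N_\varepsilon, n]$, while the finite ``head'' $\min_{N_1 \le k < N_\varepsilon} a_k$ is a positive constant that is exceeded by the tail for all large $n$ (since the tail is at most $a_n \to 0$). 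Consequently $b_n \ge (1-\varepsilon) r_n$ eventually, and letting $\varepsilon \to 0$ gives $\liminf b_n/r_n \ge 1$.

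The principal obstacle is $(i_3)$: since the indexing of $(x_n)$ is forced to match that of $(r_n)$, one cannot simply pass to a sparse subsequence as in $(i_1)$ and $(i_2)$. The running-minimum construction succeeds precisely because the eventual monotonicity of $(r_n)$ keeps $\min_{N_\varepsilon \le k \le n} a_k$ comparable with $r_n$ rather than collapsing to some much smaller $r_k$ with $k > n$.
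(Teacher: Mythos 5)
Your proof is correct and, for the only nontrivial part $(i_3)$, uses essentially the same running-minimum construction as the paper, which sets $x_n=y_{m(n)}$ with $d(y_{m(n)},p)=\min_{1\le i\le n}d(y_i,p)$ and verifies \eqref{L4.4*} by a squeeze based on $r_{m(n)}\ge r_n$ for large $n$. The only nit is that you should also fix $N_1$ large enough that $d(y_k,p)>0$ for all $k\ge N_1$ (immediate from $d(y_k,p)/r_k\to 1$ and $r_k>0$), so that the ``head'' minimum is indeed a positive constant.
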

\begin{proof}
It is easily verified that $(i_1)$ and $(i_2)$ hold. To verify $(i_3)$ observe that there is $(y_n)_{n\in\mathbb N}\in\tilde X$ which satisfies $d(y_n, p)>0$ for every $n\in\mathbb N$ and \eqref{L4.4*} with $(x_n)_{n\in\mathbb N}=(y_n)_{n\in\mathbb N}.$ Let $m(n)\in\mathbb N$ meet the conditions $m(n)\le n$ and $d(y_{m(n},p)=\mathop{\min}\limits_{1\le i \le n}d(y_i, p).$ The conditions $\mathop{\lim}\limits_{n\to\infty}y_n=p$ and $d(y_n, p)>0$ for $n\in\mathbb N$ imply that $m(n)\to\infty$ as $n\to\infty.$ Since $\tilde r$ is eventually decreasing, there is $n_{0}\in\mathbb N$ such that $(r_{m(n)})=r_{n}$ for every $n\ge n_{0}.$ Consequently, we obtain $$1=\lim_{n\to\infty}\frac{d(y_{m(n)},p)}{r_{m(n)}}=\lim_{n\to\infty}\frac{d(y_{m(n)},p)}{r_{n}}\le\lim_{n\to\infty}\frac{d(y_{n},p)}{r_{n}}=1.$$ It is clear that $(d(y_{m(n)}, p))_{n\in\mathbb N}$ is decreasing. Thus $(i_3)$ holds with $(x_n)_{n\in\mathbb N}=(y_{m(n)})_{n\in\mathbb N}.$
\end{proof}

Write $\mathbf{\Omega_{p}^{X}(n)}$ for the
set of pretangent spaces $\Omega_{p,\tilde r}^{X}$ with normal scaling sequences.
Under what conditions the family $\mathbf{\Omega_{p}^{X}(n)}$ is uniformly bounded?

\begin{remark}\label{Rem4.3}
Of course, the property of scaling sequence $\tilde r$ to be normal depends on the
underlaying space $(X,d,p)$. Nevertheless for every pointed metric space $(X,d,p)$ a
scaling sequence $\tilde r$ is normal for this space if and only if it
is normal for the space $(S_{p}(X), |\cdot |, 0)$. We shall use this simple fact
below in Proposition~\ref{Pr4.4}.
\end{remark}

In the next proposition we define $\mathbf{\Omega_{0}^{E}(n)}$ to be the set of all pretangent spaces to the
distance set $E=S_p (X)$ at 0 w.r.t. normal scaling sequences.
\begin{proposition}\label{Pr4.4} Let $(X,d,p)$ be a pointed metric space and let $E=S_p (X).$
Then we have
\begin{equation}\label{L4.5}
R^{*}(\mathbf{\Omega_{0}^{E}(n)})=R^{*}(\mathbf{\Omega_{p}^{X}(n)})
\end{equation}
where
$R^{*}(\mathbf{\Omega_{p}^{X}(n)})$ and $R^{*}(\mathbf{\Omega_{0}^{E}(n)})$ are defined by \eqref{L4.1}
with $\mathfrak {F}=\mathbf{\Omega_{p}^{X}(n)}$ and $\mathfrak {F}=\mathbf{\Omega_{0}^{E}(n)}$ respectively.
\end{proposition}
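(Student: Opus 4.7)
The plan is to show that, for every scaling sequence $\tilde r$, the supremum $S_X(\tilde r)$ of $\rho^*(\Omega_{p,\tilde r}^{X})$ over all maximal self-stable families $\tilde X_{p,\tilde r}\ni\tilde p$ equals the analogous supremum $S_E(\tilde r)$ taken for the pointed metric space $(E,|\cdot|,0)$ in place of $(X,d,p)$, and then to invoke Remark~\ref{Rem4.3} to pass to the suprema over normal scaling sequences.

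First, fix any scaling sequence $\tilde r$ and any maximal self-stable family $\tilde X_{p,\tilde r}$ with $\tilde p\in\tilde X_{p,\tilde r}$. From \eqref{eq1.4} and $\tilde p=(p,p,\ldots)$ one reads off
\[
\rho^{*}\bigl(\Omega_{p,\tilde r}^{X}\bigr)=\sup_{\tilde x\in\tilde X_{p,\tilde r}}\tilde d_{\tilde r}(\tilde p,\tilde x)=\sup_{\tilde x\in\tilde X_{p,\tilde r}}\lim_{n\to\infty}\frac{d(x_n,p)}{r_n}.
\]
Conversely, whenever some $\tilde x=(x_n)\in\tilde X$ admits a finite limit $L:=\lim_{n\to\infty} d(x_n,p)/r_n$, the pair $\{\tilde p,\tilde x\}$ is self-stable w.r.t.\ $\tilde r$, so the standard Zorn-lemma argument behind Proposition~\ref{Pr1.2} produces a maximal self-stable family containing both, and the corresponding pretangent space satisfies $\rho^{*}\ge L$. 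Combining these two observations,
\[
S_X(\tilde r)=\sup\Bigl\{L\ge 0:\exists\,\tilde x\in\tilde X\text{ with }\lim_{n\to\infty}\tfrac{d(x_n,p)}{r_n}=L\Bigr\},
\]
and an identical argument applied to $(E,|\cdot|,0)$ gives the parallel formula in which $\tilde x\in\tilde X$ is replaced by a sequence $(a_n)$ of points of $E$ and $d(x_n,p)$ by $a_n$.

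The surjection $\phi\colon X\to E$, $\phi(x):=d(x,p)$, then matches the two suprema. Given $\tilde x\in\tilde X$ realising a value $L$ on the $X$-side, $(d(x_n,p))_{n\in\mathbb N}$ is a sequence in $E$ realising the same $L$; conversely, given a sequence $(a_n)$ of points in $E$ realising $L$, choose any $x_n\in\phi^{-1}(a_n)\subseteq X$ to obtain $\tilde x\in\tilde X$ with $d(x_n,p)=a_n$ and the same limit. Hence $S_X(\tilde r)=S_E(\tilde r)$ for \emph{every} scaling sequence $\tilde r$. Since by Remark~\ref{Rem4.3} a scaling sequence is normal for $(X,d,p)$ if and only if it is normal for $(E,|\cdot|,0)$, taking the supremum over all normal $\tilde r$ yields~\eqref{L4.5}.

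The only step that seems to require any real care is the Zorn-extension in the second paragraph, which is needed to guarantee that every admissible scalar $L$ is actually attained as the distance to the marked point in some genuine pretangent space; once this is in place, the rest of the argument is the essentially bookkeeping observation that $\rho^{*}$ depends on $\tilde x$ only through the $E$-valued scalar sequence $(d(x_n,p))_n$.
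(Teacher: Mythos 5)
Your proof is correct and follows essentially the same route as the paper: both arguments reduce each side of \eqref{L4.5} to the set of attainable values of $\lim_{n\to\infty} d(x_n,p)/r_n$, match these via the distance map $x\mapsto d(x,p)$, and use Remark~\ref{Rem4.3} to identify the normal scaling sequences of $(X,d,p)$ and $(E,|\cdot|,0)$. You merely make explicit the Zorn-extension step that the paper's ``hence'' leaves implicit, which is a welcome clarification rather than a deviation.
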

\begin{proof}
If $p\notin ac X$, then the set of normal scaling sequences is
empty. Consequently we have $\mathbf{\Omega_{0}^{E}(n)}=\mathbf{\Omega_{p}^{X}(n)}=\varnothing,$ so we suppose that $p\in ac X.$

 For each normal scaling sequence $\tilde r$ and every $\tilde x \in
\tilde X$ having the finite limit $\mathop{\lim}\limits_{n\to\infty}\frac{d(x_n,p)}{r_n}$ we
can find $(s_n)_{n\in\mathbb N}\in\tilde E$ such that
\begin{equation}\label{**}\lim_{n\to\infty}\frac{d(x_n,p)}{r_n}=\lim_{n\to\infty}\frac{s_n}{r_n}.\end{equation}
Hence the inequality
\begin{equation}\label{L4.7}
\rho(\alpha, \beta)\le R^{*}(\mathbf{\Omega_{0}^{E}(n)})
\end{equation}
holds with $\alpha=\pi(\tilde p)$ for every $\beta\in\Omega_{p,\tilde r}^{X}$ and every
$\Omega_{p,\tilde r}^{X}\in\mathbf{\Omega_{p}^{X}(n)}.$ Taking supremum over all $\Omega_{p,\tilde r}^{X}\in\mathbf{\Omega_{p}^{X}(n)}$ and $\beta\in\Omega_{p,\tilde r}^{X},$ we get
\begin{equation}\label{L4.6} R^{*}(\mathbf{\Omega_{0}^{E}(n)})\ge R^{*}(\mathbf{\Omega_{p}^{X}(n)}).\end{equation} It still remains to prove the inequality
\begin{equation}\label{L4.8}
 R^{*}(\mathbf{\Omega_{0}^{E}(n)})\le R^{*}(\mathbf{\Omega_{p}^{X}(n)}).
\end{equation}
As is easily seen, for every normal scaling $\tilde r$ and every $\tilde s\in
\tilde E$ with $\mathop{\lim}\limits_{n\to\infty}\frac{s_n}{r_n}<\infty,$ there is $\tilde x\in\tilde X$ satisfying \eqref{**}. Now reasoning as in the proof of \eqref{L4.6} we obtain \eqref{L4.8}.
Equality \eqref{L4.5} follows from \eqref{L4.6} and
\eqref{L4.8}.
\end{proof}

\begin{lemma}\label{lem4.5}
Let $E\subseteq\mathbb R^{+}$ and let $0\in ac E.$ If the inequality
\begin{equation}\label{L4.9}
R^{*}(\mathbf{\Omega_{0}^{E}(n)})<\infty
\end{equation} holds, then $E\in \textbf{CSP}$.
\end{lemma}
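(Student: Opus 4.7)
The plan is to argue by contradiction. Suppose $R^{*}(\mathbf{\Omega_{0}^{E}(n)}) < \infty$ but $E \notin \textbf{CSP}$. By Definition~\ref{D2*} there exists $\tilde{\tau} = (\tau_n)_{n \in \mathbb{N}} \in \tilde{E}_{0}^{d}$ for which $E$ fails to be $\tilde{\tau}$-strongly porous at $0$. Negating the characterisation in Lemma~\ref{Pr5}, I extract the following: for every $k \in (1, \infty)$ there exists $K = K(k) > k$ and an infinite set of indices $n$ for which $(k\tau_n, K\tau_n) \cap E \neq \varnothing$.

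Fix an arbitrary constant $C > 0$ and choose $k > C$ together with the associated $K$. For each $n$ in the corresponding infinite index set, select $s_n \in (k\tau_n, K\tau_n) \cap E$. Because the ratios $s_n/\tau_n$ lie in the bounded interval $(k, K)$, a Bolzano--Weierstrass extraction provides a subsequence, which after relabelling I write as $(n_j)_{j \in \mathbb{N}}$, with $s_{n_j}/\tau_{n_j} \to v$ for some $v \in [k, K]$. Set $\tilde{r} = (\tau_{n_j})_{j \in \mathbb{N}}$ and $\tilde{s} = (s_{n_j})_{j \in \mathbb{N}}$. The sequence $\tilde{r}$ is eventually decreasing (as a subsequence of an eventually decreasing sequence) and tends to $0$; moreover, taking $\tilde{r}$ itself as the comparison sequence (each $\tau_{n_j}\in E$ and $\tau_{n_j}/\tau_{n_j}=1$) verifies \eqref{L4.4*}. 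Hence $\tilde{r}$ is a normal scaling sequence for the pointed space $(E, |\cdot|, 0)$ in the sense of Definition~\ref{D4.2}.

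Both $\tilde{0} = (0, 0, \ldots)$ and $\tilde{s}$ belong to the set of sequences in $E$ and are mutually stable with respect to $\tilde{r}$, since $|s_{n_j}|/\tau_{n_j} \to v$. Applying Zorn's lemma to the self-stable family $\{\tilde{0}, \tilde{s}\}$, I obtain a maximal self-stable family $\tilde{E}_{0, \tilde{r}}$ containing both sequences. The associated pretangent space $\Omega_{0, \tilde{r}}^{E}$ then belongs to $\mathbf{\Omega_{0}^{E}(n)}$, has marked point $\alpha = \pi(\tilde{0})$, and contains the point $\beta = \pi(\tilde{s})$ for which $\rho(\alpha, \beta) = v \ge k > C$. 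Consequently $R^{*}(\mathbf{\Omega_{0}^{E}(n)}) > C$, and since $C$ was arbitrary this forces $R^{*}(\mathbf{\Omega_{0}^{E}(n)}) = \infty$, contradicting the hypothesis \eqref{L4.9}.

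The only subtle point is the simultaneous arrangement of three requirements on the extracted scaling: $\tilde{r}$ must remain eventually decreasing (inherited automatically from $\tilde{\tau}$), it must admit a comparison sequence in $E$ realising the limit $1$ in \eqref{L4.4*} (which is why it is essential that the witness $\tilde{\tau}$ is already drawn from $E$, allowing $\tilde{r}$ itself to serve as the comparison), and the candidate point $\tilde{s}$ must yield a genuine limit in \eqref{eq1.2} (secured by the Bolzano--Weierstrass extraction on the bounded ratios $s_n/\tau_n$). Once these three strands are aligned, the contradiction is immediate and no further porosity estimates are required.
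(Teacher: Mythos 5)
Your proof is correct and follows essentially the same route as the paper: negate Lemma~\ref{Pr5} to find infinitely many indices with $(k\tau_n,K\tau_n)\cap E\ne\varnothing$, extract a convergent subsequence of the ratios, use the subsequence of $\tilde\tau$ as a normal scaling sequence, and build a pretangent space in $\mathbf{\Omega_{0}^{E}(n)}$ containing a point at distance at least $k$ from the marked point. The only cosmetic difference is that you let $k$ exceed an arbitrary $C$ and conclude $R^{*}(\mathbf{\Omega_{0}^{E}(n)})=\infty$, whereas the paper fixes $k=2R^{*}(\mathbf{\Omega_{0}^{E}(n)})$ and derives the self-contradictory inequality $R^{*}\ge 2R^{*}$ with $1\le R^{*}<\infty$.
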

\begin{proof}
Suppose that \eqref{L4.9} holds but there is $\tilde \tau=(\tau_n)_{n\in\mathbb
N}\in\tilde E_0^{d}$ such that $E$ is not $\tilde\tau$-strongly porous at 0. Then, by
Lemma~\ref{Pr5}, for every $k>1$ there is $K\in(k,\infty)$ such that
$
(k\tau_n, K\tau_n)\cap E\ne\varnothing
$ for all $n$ belonging to an infinite set $A\subseteq\mathbb N.$ Let us put \begin{equation}\label{L4.11}
k=2R^{*}(\mathbf{\Omega_{0}^{E}(n)}).
\end{equation}It simply follows from \eqref{L4.1} and Definition~\ref{D4.2} that $R^{*}(\mathbf{\Omega_{0}^{E}(n)})\ge
1.$ Thus $k\ge 2.$ Consequently we can find $K\in(k, \infty)$ and an infinite set
$A=\{n_1,...,n_j, ...\}\subseteq\mathbb N,$ such that for every $n_j\in A$ there
is $x_j \in E$ satisfying the double inequality
\begin{equation}\label{L4.12}
k<\frac{x_j}{\tau_{n_j}}<K.
\end{equation}Thus the sequence $\left(\frac{x_j}{\tau_{n_j}}\right)_{j\in\mathbb N}$ is bounded.
Hence it contains a convergent subsequence.
Passing to this subsequence we obtain
\begin{equation}\label{L4.13}
\lim_{j\to\infty}\frac{x_j}{\tau_{n_j}}<\infty.
\end{equation}
Now \eqref{L4.11} and \eqref{L4.12} imply
\begin{equation}\label{L4.14}
\lim_{j\to\infty}\frac{x_j}{\tau_{n_j}}=\lim_{j\to\infty}\frac{|0-x_j|}{\tau_{n_j}}\ge
2R^{*}(\mathbf{\Omega_{0}^{E}(n)}).
\end{equation}
The scaling sequence $\tilde r=(r_j)_{j\in\mathbb N}$ with $r_j=\tau_{n_j},$
$j\in\mathbb N,$ is normal. The existence of
finite limit \eqref{L4.13} implies that $\tilde x=(x_j)_{j\in\mathbb N}$ and $\tilde
0$ are mutually stable w.r.t $\tilde r.$ Consequently there is a maximal self-stable
family $\tilde E_{0, \tilde r}$ such that $\tilde x, \tilde 0\in\tilde E_{0, \tilde r}.$ Write $\Omega_{0, \tilde
r}^{E}$ for the metric identification of $\tilde E_{0, \tilde r}$ and $\alpha$ for the
natural projection of $\tilde 0.$ Using \eqref{L4.14} and \eqref{L4.1}, we obtain
$$R^{*}(\mathbf{\Omega_{0}^{E}(n)})\ge\sup_{\gamma\in\Omega_{0,\tilde r}^{E}}\rho(\alpha, \gamma)\ge
2R^{*}(\mathbf{\Omega_{0}^{E}(n)}).$$ The last double inequality is inconsistent because
$1\le R^{*}(\mathbf{\Omega_{0}^{E}(n)})< \infty.$ Thus if \eqref{L4.9} holds, then $E$ is
$\tilde \tau$-strongly porous at 0, as required.
\end{proof}

Let $\tilde \tau\in\tilde E_{0}^{d}.$ Define a subset $\tilde I_{E}^{d} (\tilde \tau)$ of
the set $\tilde I_E^{d}$ by the rule:
$$(\{(a_n ,b_n)\}_{n\in\mathbb N}\in\tilde I_{E}^{d}(\tilde \tau))\Leftrightarrow (\{(a_n ,b_n)\}_{n\in\mathbb N}\in\tilde I_{E}^{d}\, \mbox{and}\, $$ $$\tau_n\le a_n \, \mbox{for sufficiently large}\, n\in\mathbb N).$$
Write
\begin{equation}\label{L9}
C(\tilde \tau):=\inf(\limsup_{n\to\infty}\frac{a_n}{\tau_n})\quad \mbox{and}\quad
C_E:=\sup_{\tilde \tau\in\tilde E_{0}^{d}}C(\tilde\tau)
\end{equation} where the infimum is taken over all $\{(a_n ,b_n)\}_{n\in\mathbb N}\in\tilde I_{E}^{d}(\tilde \tau).$

Using Theorem~\ref{ImpTh} we can prove the following

\begin{proposition}\label{P2}\emph{\cite{bd2}}
Let $E\subseteq\mathbb R^{+}$ and $\tilde\tau\in \tilde E_{0}^{d}.$ The set $E$ is $\tilde\tau$-strongly porous at $0$ if and only if $C(\tilde\tau)<\infty.$  The membership $E\in \textbf{CSP}$ holds if and only if $C_E <\infty.$
\end{proposition}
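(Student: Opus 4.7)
The plan is to split the proposition into its two equivalences. The first (single scaling sequence) equivalence is essentially a translation of Definition~\ref{D2*} via Lemma~\ref{Lem2.3}. The second (uniform) equivalence follows in one direction from the first and in the other from a quantitative use of Theorem~\ref{ImpTh}.

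For the first equivalence, I would argue both directions at once using Lemma~\ref{Lem2.3}. If $E$ is $\tilde\tau$-strongly porous at $0$, Definition~\ref{D2*} yields $\{(a_n,b_n)\}\in\tilde I_E^d$ with $\tilde\tau\asymp\tilde a$; Lemma~\ref{Lem2.3} then gives $\tau_n\le a_n$ eventually and $\limsup a_n/\tau_n<\infty$, so $\{(a_n,b_n)\}\in\tilde I_E^d(\tilde\tau)$ and $C(\tilde\tau)<\infty$. Conversely, if $C(\tilde\tau)<\infty$, the definition of the infimum produces $\{(a_n,b_n)\}\in\tilde I_E^d(\tilde\tau)$ with finite $\limsup a_n/\tau_n$, and since $\tau_n\le a_n$ for large $n$ is built into $\tilde I_E^d(\tilde\tau)$, Lemma~\ref{Lem2.3} once more gives $\tilde\tau\asymp\tilde a$, i.e.\ $\tilde\tau$-strong porosity.

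For the second equivalence, the direction $C_E<\infty\Rightarrow E\in\textbf{CSP}$ is immediate from the first part, since $C(\tilde\tau)\le C_E$ for every $\tilde\tau\in\tilde E_0^d$. The case $0\notin ac E$ is trivial ($\tilde E_0^d=\varnothing$, both sides hold). Assume therefore $0\in ac E$ and $E\in\textbf{CSP}$. Then $E$ is strongly porous at $0$, so Theorem~\ref{ImpTh} provides a universal $\tilde L=\{(l_k,m_k)\}_{k\in\mathbb N}\in\tilde I_E^d$ with $M(\tilde L)<\infty$. The plan is to show that this single $\tilde L$ controls $C(\tilde\tau)$ for every $\tilde\tau$. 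Given $\tilde\tau\in\tilde E_0^d$, for all sufficiently large $n$ define
\[
k(n):=\max\{k\in\mathbb N:l_k\ge\tau_n\},
\]
which is well defined because $(l_k)$ is eventually decreasing with $l_k\to 0$ and $\tau_n\to 0$. Put $a_n:=l_{k(n)}$ and $b_n:=m_{k(n)}$. Since $\tau_n\in E$ and $\tau_n>l_{k(n)+1}$, the gap $(l_{k(n)+1},m_{k(n)+1})$ cannot contain $\tau_n$, so $m_{k(n)+1}\le\tau_n$; hence
\[
\frac{a_n}{\tau_n}\;=\;\frac{l_{k(n)}}{\tau_n}\;\le\;\frac{l_{k(n)}}{m_{k(n)+1}}.
\]
As $\tau_n\to 0$ forces $k(n)\to\infty$, taking $\limsup$ yields $\limsup_n a_n/\tau_n\le M(\tilde L)$. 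I would then verify the remaining membership $\{(a_n,b_n)\}\in\tilde I_E^d(\tilde\tau)$: each $(a_n,b_n)$ is a component of $\mathrm{Ext}\,E$; $(a_n)$ is eventually decreasing because $(\tau_n)$ is and $k(n)$ is non-decreasing along that tail; $a_n\to 0$ because $k(n)\to\infty$; and $(b_n-a_n)/b_n=1-l_{k(n)}/m_{k(n)}\to 1$ since $\tilde L\in\tilde I_E^d$. Consequently $C(\tilde\tau)\le M(\tilde L)$ for every $\tilde\tau$, giving $C_E\le M(\tilde L)<\infty$.

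The main obstacle is precisely this last construction: the hypothesis $E\in\textbf{CSP}$ only gives information one scaling sequence at a time, and converting it into a \emph{uniform} bound on $C(\tilde\tau)$ requires selecting, for each $\tilde\tau$, the correct gap of $E$ compatible with $\tilde\tau$ and simultaneously verifying the four defining conditions of $\tilde I_E^d(\tilde\tau)$. The role of Theorem~\ref{ImpTh} is to package this uniformity into the single finite quantity $M(\tilde L)$; the role of $\tau_n$ lying in $E$ is to ensure that the gap just above the next $l_{k(n)+1}$ closes before it reaches $\tau_n$, which is what converts $M(\tilde L)$ into an upper estimate of $a_n/\tau_n$.
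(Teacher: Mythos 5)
Your proof is correct: the first equivalence is indeed just Definition~\ref{D2*} read through Lemma~\ref{Lem2.3}, and your gap-selection argument $k(n)=\max\{k:l_k\ge\tau_n\}$ (using that $\tau_n\in E$ forces $m_{k(n)+1}\le\tau_n$) correctly converts the universal sequence $\tilde L$ of Theorem~\ref{ImpTh} into the uniform bound $C_E\le M(\tilde L)<\infty$. The paper itself does not reprove this proposition --- it cites \cite{bd2} and only remarks that Theorem~\ref{ImpTh} is the key ingredient --- so your argument follows exactly the route the paper indicates, and it is consistent with Lemma~\ref{Lem2.15}, which sharpens your inequality to the equality $M(\tilde L)=C_E$.
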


\begin{remark}\label{R1}
If $E\subseteq\mathbb R^{+}, \, i=1,2, \,  \{(a_{n}^{(i)}, b_{n}^{(i)})\}_{n\in\mathbb N}\in\tilde I_{E}^{d}$ and $\tilde a^{1}\asymp \tilde a^{2}$ where $\tilde a^{i}=(a_{n}^{(i)})_{n\in\mathbb N}, \, $ then there is $n_0 \in\mathbb N$ such that $(a_{n}^{(1)}, b_{n}^{(1)})=(a_{n}^{(2)}, b_{n}^{(2)})$ for every $n\ge n_0.$ Consequently if $E$ is $\tilde\tau$-strongly porous and $\{(a_n, b_n)\}_{n\in\mathbb N}\in\tilde I_{E}^{d}(\tilde\tau),$ then we have
\begin{equation*}\label{z1}
\mbox{either}\quad
\limsup_{n\to\infty}\frac{a_n}{\tau_n}=\infty
\quad\mbox{or}\quad \limsup_{n\to\infty}\frac{a_n}{\tau_n}=C(\tilde\tau)<\infty.\end{equation*}
\end{remark}

\begin{lemma}\emph{\cite{bd2}}\label{Lem2.15}
Let $E\in \textbf{CSP}.$ If $\tilde L\in\tilde
I_{E}^{d}$ is universal,
then
$
M(\tilde L)=C_E
$
where the quantities $M(\tilde L)$ and $C_E$ are defined by \eqref{L13} and \eqref{L9}
respectively.
\end{lemma}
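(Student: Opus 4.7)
The plan is to prove the two inequalities $C_E \le M(\tilde L)$ and $M(\tilde L) \le C_E$ separately; both quantities are finite by Theorem~\ref{ImpTh} and Proposition~\ref{P2} under $E\in\textbf{\emph{CSP}}$. Universality of $\tilde L$ is the central tool: every $\tilde B \in \tilde I_E^d$ has its left endpoints drawn from $\{l_k\}_{k \in \mathbb N}$, so admissible sequences in $\tilde I_E^d(\tilde\tau)$ can be compared uniformly against $\tilde L$.

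For $C_E \le M(\tilde L)$, fix $\tilde\tau = (\tau_n) \in \tilde E_0^d$ and for sufficiently large $n$ set $h(n) := \max\{k \in \mathbb N : l_k \ge \tau_n\}$; since $\tau_n \to 0$ and $l_k \to 0$, $h$ is well defined, nondecreasing, and $h(n) \to \infty$. The sequence $(l_{h(n)}, m_{h(n)})_{n \ge N_0}$ lies in $\tilde I_E^d(\tilde\tau)$: each term is a component of $Ext E$, $(l_{h(n)})$ is eventually nonincreasing with limit $0$, and $(m_{h(n)} - l_{h(n)})/m_{h(n)} \to 1$ because $\tilde L \in \tilde I_E^d$. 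The maximality of $h(n)$ gives $l_{h(n)+1} < \tau_n$, and since $\tau_n \in E$ cannot lie in the component $(l_{h(n)+1}, m_{h(n)+1})$ of $Ext E$, we conclude $\tau_n \ge m_{h(n)+1}$. Therefore $l_{h(n)}/\tau_n \le l_{h(n)}/m_{h(n)+1}$, and
\[
C(\tilde\tau) \le \limsup_n \frac{l_{h(n)}}{\tau_n} \le \limsup_k \frac{l_k}{m_{k+1}} = M(\tilde L).
\]
Taking the supremum over $\tilde\tau$ yields $C_E \le M(\tilde L)$.

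For the reverse inequality we construct a single $\tilde\tau$ with $C(\tilde\tau) \ge M(\tilde L)$. Pick a subsequence $(n_k)$ with $l_{n_k}/m_{n_k+1} \to M(\tilde L)$. Because $(l_{n_k+1}, m_{n_k+1})$ is a maximal $Ext E$-component, $E \cap [m_{n_k+1}, m_{n_k+1} + \varepsilon) \neq \emptyset$ for every $\varepsilon > 0$, so one can choose $\tau_k \in E$ with $m_{n_k+1} \le \tau_k \le l_{n_k}$ and $\tau_k/m_{n_k+1} \to 1$. Passing to a further subsequence if needed makes $(\tau_k)$ strictly decreasing with limit $0$, so $\tilde\tau := (\tau_k)_{k \in \mathbb N} \in \tilde E_0^d$. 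For any $\{(a_k, b_k)\} \in \tilde I_E^d(\tilde\tau)$, universality provides $f$ with $a_k = l_{f(k)}$; the bound $l_{f(k)} = a_k \ge \tau_k \ge m_{n_k+1}$, combined with $l_j < m_{n_k+1}$ for $j > n_k$ (eventually decreasing $l$), forces $f(k) \le n_k$, so $a_k \ge l_{n_k}$. Hence
\[
\limsup_k \frac{a_k}{\tau_k} \ge \lim_k \frac{l_{n_k}}{m_{n_k+1}} \cdot \lim_k \frac{m_{n_k+1}}{\tau_k} = M(\tilde L),
\]
which gives $C(\tilde\tau) \ge M(\tilde L)$ and thus $C_E \ge M(\tilde L)$.

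The main obstacle is building the witnessing $\tilde\tau$ in the lower-bound direction: one must simultaneously arrange $\tau_k \in E$, $\tau_k$ very close to $m_{n_k+1}$ from above, and $(\tau_k)$ strictly decreasing with limit $0$. The approximation property of $m_{n_k+1}$ by elements of $E$ from above supplies candidates; strict monotonicity is then enforced by thinning $(n_k)$ so that consecutive $m_{n_k+1}$ decay fast enough that the small upward perturbations required to land in $E$ do not destroy the order. Once $\tilde\tau$ is in hand, universality does all the remaining work in both directions.
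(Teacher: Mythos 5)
Your argument is correct, but there is nothing in the paper to compare it with: Lemma~\ref{Lem2.15} is imported from \cite{bd2} without proof, so what you have written is a genuinely self-contained derivation. Both halves check out. For $C_E\le M(\tilde L)$, the selection $h(n)=\max\{k:\, l_k\ge\tau_n\}$ is well defined for large $n$, eventually nondecreasing with $h(n)\to\infty$, the extracted sequence does belong to $\tilde I_{E}^{d}(\tilde\tau)$, and the key step $\tau_n\ge m_{h(n)+1}$ (because $\tau_n\in E$ cannot lie in the component $(l_{h(n)+1},m_{h(n)+1})$ of $Ext\,E$) is exactly right. Note that this half uses only $\tilde L\in\tilde I_{E}^{d}$, not universality, so you have in fact shown $C_E\le M(\tilde B)$ for every $\tilde B\in\tilde I_{E}^{d}$; universality is needed only for the reverse inequality, i.e.\ universal elements minimize $M$. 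For $M(\tilde L)\le C_E$, the approximation of $m_{n_k+1}$ from above by points of $E$ is justified by maximality of the component, and universality together with the eventual monotonicity of $(l_j)$ correctly forces $f(k)\le n_k$, hence $a_k\ge l_{n_k}$ for large $k$ (the finitely many $k$ with $f(k)$ below the index where $(l_j)$ starts decreasing are harmless since $a_k\to 0$). One blemish: the constraint $\tau_k\le l_{n_k}$ you impose when choosing $\tau_k$ is not always achievable (e.g.\ when $m_{n_k+1}\ge l_{n_k}$, or when $E$ meets $[m_{n_k+1},\infty)$ only above $l_{n_k}$), but it is also never used --- only $\tau_k\ge m_{n_k+1}$ and $\tau_k/m_{n_k+1}\to 1$ enter the estimate --- so it should simply be dropped. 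The implicit standing assumption $0\in ac\,E$ (otherwise $\tilde E_{0}^{d}=\varnothing$ and the statement degenerates) is the same one the paper makes wherever the lemma is applied.
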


\begin{proposition}\label{Pr4.6}
Let $E\subseteq\mathbb R^{+}$ and let $0\in ac E.$ Then the
equality
\begin{equation}\label{L4.15}
C_E =R^{*}(\mathbf{\Omega_{0}^{E}(n)})
\end{equation} holds.
\end{proposition}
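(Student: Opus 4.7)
The plan is to prove the two inequalities $R^{*}(\mathbf{\Omega_{0}^{E}(n)}) \le C_E$ and $C_E \le R^{*}(\mathbf{\Omega_{0}^{E}(n)})$ separately in the extended reals $[0,\infty]$. The case $C_E = \infty$ will be disposed of immediately: by Proposition~\ref{P2} it is equivalent to $E \notin \textbf{\emph{CSP}}$, and then the contrapositive of Lemma~\ref{lem4.5} forces $R^{*}(\mathbf{\Omega_{0}^{E}(n)}) = \infty$ as well, so I may henceforth assume $C_E < \infty$ and in particular $C(\tilde\tau) < \infty$ for every $\tilde\tau \in \tilde E_{0}^{d}.$

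For $R^{*} \le C_E$, I will fix $\Omega_{0,\tilde r}^{E} \in \mathbf{\Omega_{0}^{E}(n)}$ and $\beta = \pi(\tilde s) \in \Omega_{0,\tilde r}^{E}$ with $\tilde s = (s_n) \in \tilde E$, and set $L := \rho(\alpha,\beta) = \lim s_n/r_n$. Part $(i_3)$ of Proposition~\ref{prop} yields an eventually decreasing $(y_n) \in \tilde E$ with $y_n/r_n \to 1$; setting $\tilde\tau := (y_n) \in \tilde E_{0}^{d}$ gives $s_n/\tau_n \to L$. The key claim is $C(\tilde\tau) \ge L$. Since every admissible $\{(a_n,b_n)\}$ satisfies $a_n \ge \tau_n$ eventually, one always has $C(\tilde\tau) \ge 1$, so the case $L \le 1$ is immediate. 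For $L > 1$, I assume for contradiction that $C(\tilde\tau) < L$ and choose $\{(a_n,b_n)\} \in \tilde I_{E}^{d}(\tilde\tau)$ with $\limsup a_n/\tau_n < L$; after passing to a subsequence one has $a_n/\tau_n \to L' \in [1,L)$, while the defining property $(b_n - a_n)/b_n \to 1$ of $\tilde I_{E}^{d}$ forces $a_n/b_n \to 0$ and hence $b_n/\tau_n \to \infty$. For all sufficiently large $n$ then $a_n < s_n < b_n$, contradicting $s_n \in E$ and $(a_n,b_n) \cap E = \varnothing$. Taking suprema over $\beta$ and over $\Omega_{0,\tilde r}^{E}$ delivers $R^{*} \le C_E$.

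For the reverse $C_E \le R^{*}$, I will fix an arbitrary $\tilde\tau \in \tilde E_{0}^{d}$ and construct a pretangent realising the distance $C(\tilde\tau)$ from the marked point. Since $C(\tilde\tau) < \infty$, the definition of infimum combined with Remark~\ref{R1} produces $\{(a_n,b_n)\} \in \tilde I_{E}^{d}(\tilde\tau)$ with $\limsup a_n/\tau_n = C(\tilde\tau)$, and after passing to a common subsequence I may assume $a_n/\tau_n \to C(\tilde\tau)$. Since each $a_n$ lies in $\overline E$ and the open gap $(a_n, b_n)$ excludes $E$, the set $E$ must accumulate at $a_n$ from the left, so there exist $s_n \in E$ with $s_n \le a_n$ and $a_n - s_n < \tau_n/n$; consequently $s_n/\tau_n \to C(\tilde\tau)$. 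Taking $\tilde r := \tilde\tau$, which is automatically normal, the sequences $\tilde 0$ and $\tilde s$ are mutually stable with respect to $\tilde r$, so Zorn's lemma embeds them in a maximal self-stable family. The associated pretangent $\Omega_{0,\tilde r}^{E} \in \mathbf{\Omega_{0}^{E}(n)}$ contains $\beta = \pi(\tilde s)$ with $\rho(\alpha,\beta) = C(\tilde\tau)$, whence $R^{*} \ge C(\tilde\tau)$; taking the supremum over $\tilde\tau$ closes the argument.

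The hardest step will be the squeeze in the first direction: trapping $s_n$ strictly inside the gap $(a_n,b_n)$ requires both the hypothesised upper bound on $a_n/\tau_n$ and the divergence $b_n/\tau_n \to \infty$, the latter being guaranteed only through the component structure of $\mathrm{Ext}\,E$ encoded by $(b_n - a_n)/b_n \to 1$. A secondary nuisance is that $E$ need not be closed, so in the second direction one must approximate the endpoints $a_n \in \overline E$ by genuine $E$-points; this is painless because the gap condition forces any nearby $s \in E$ to lie on the left.
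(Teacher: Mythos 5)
Your proof is correct, and while the overall skeleton (two inequalities, with the gap endpoints $a_n$ mediating between $C(\tilde\tau)$ and distances in pretangent spaces) is the same as the paper's, both halves are executed differently. For $R^{*}(\mathbf{\Omega_{0}^{E}(n)})\le C_E$ the paper routes the bound through the $\asymp$ machinery: it invokes Proposition~\ref{P2} to get $\tilde x$-strong porosity, Lemma~\ref{Lem2.3} to conclude $y_n\le a_n$ eventually, and Remark~\ref{R1} to identify $\limsup a_n/x_n$ with $C(\tilde x)$; you instead run a direct squeeze by contradiction, showing that $C(\tilde\tau)<L$ would force the representative $s_n$ into the gap $(a_n,b_n)$ because $a_n/\tau_n$ stays below $L$ while $b_n/\tau_n\to\infty$. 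The content is the same inequality $s_n\le a_n$, but your version is more self-contained and, usefully, handles an arbitrary representative $\tilde s\in\tilde E$ rather than only $\tilde y\in\tilde E_{0}^{d}$ as the paper's phrasing does. The more substantive divergence is in $C_E\le R^{*}$: since the left endpoints $a_n$ need not lie in $E$, the paper passes to the closure $E^{1}$, builds the realizing pretangent space there, and transfers back via Lemma~\ref{Lem1.7}; you stay inside $E$ by observing that $a_n\in\overline{E}$ and approximating it from the left by genuine points $s_n\in E$ with $a_n-s_n<\tau_n/n$ (taking $s_n=a_n$ when $a_n\in E$), which eliminates the dependence on Lemma~\ref{Lem1.7} altogether. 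Both approaches are sound; yours buys a shorter dependency chain at the cost of the small topological argument that $a_n\in\overline{E}$ and is $E$-approximable from below, which you justify adequately via the maximality of the component $(a_n,b_n)$ of $Int(\mathbb R^{+}\setminus E)$.
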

\begin{proof}
Let us prove the inequality
\begin{equation}\label{L4.16}
C_E \ge R^{*}(\mathbf{\Omega_{0}^{E}(n)}).
\end{equation}This is trivial if $C_E=\infty.$  Suppose
that
$
C_E <\infty.
$
Inequality \eqref{L4.16} holds if, for every normal scaling sequence $\tilde
r=(r_n)_{n\in\mathbb N}$ and each $\tilde y=(y_n)_{n\in\mathbb N}\in\tilde E_{0}^{d},$
the existence of the finite limit
$\mathop{\lim}\limits_{n\to\infty}\frac{y_n}{r_n}$ implies the
inequality
\begin{equation}\label{L4.18}
\lim_{n\to\infty}\frac{y_n}{r_n}\le C_E.
\end{equation}
Since $\tilde r$ is normal, Proposition~\ref{prop} implies that there is $\tilde x=(x_n)_{n\in\mathbb N}\in\tilde E_{0}^{d}$
with $\mathop{\lim}\limits_{n\to\infty}\frac{r_n}{x_n}=1.$ Consequently \eqref{L4.18} holds if
and only if
\begin{equation}\label{L4.19}
\lim_{n\to\infty}\frac{y_n}{x_n}\le C_E.
\end{equation}
If $\mathop{\lim}\limits_{n\to\infty}\frac{y_n}{x_n}=0,$ then \eqref{L4.19} is trivial.
Suppose that $0<\mathop{\lim}\limits_{n\to\infty}\frac{y_n}{x_n}<\infty.$ The last double
inequality implies the equivalence $\tilde x\asymp\tilde y.$ In accordance with
Proposition~\ref{P2}, $E\in \textbf{\emph{CSP}}$ if and only if $C_{E}<\infty$
holds. Hence $E$ is $\tilde x$-strongly porous at 0. Consequently there is $\{(a_n,
b_n)\}_{n\in\mathbb N}\in\tilde I_{E}^{d}$ such that $\tilde x\asymp\tilde a.$ The relations
$\tilde x\asymp\tilde y$ and $\tilde x\asymp\tilde a$ imply $\tilde y\asymp\tilde a.$
Using Lemma~\ref{Lem2.3} we can find $N_0 \in \mathbb N$ such that $y_n \le a_n$ for
$n\ge N_0.$ Consequently we have $\frac{y_n}{x_n}\le\frac{a_n}{x_n}$ for $n\ge N_0,$
 which implies
$$\lim_{n\to\infty}\frac{y_n}{x_n}\le\limsup_{n\to\infty}\frac{a_n}{x_n}\le C(\tilde\tau)\le
C_E$$ (see \eqref{L9}). Inequality \eqref{L4.16} follows.

To prove \eqref{L4.15}, it
still remains to verify the inequality
\begin{equation}\label{L4.20}
C_E\le R^{*}(\mathbf{\Omega_{0}^{E}(n)}).
\end{equation}
It is trivial if $R^{*}(\mathbf{\Omega_{0}^{E}(n)})=\infty.$ Suppose that
\begin{equation}\label{L4.21}
 R^{*}(\mathbf{\Omega_{0}^{E}(n)})<\infty.
\end{equation}
Inequality \eqref{L4.20}
holds if
\begin{equation}\label{L4.22}
 C(\tilde x)\le R^{*}(\mathbf{\Omega_{0}^{E}(n)}).
\end{equation} for every $\tilde x\in\tilde E_{0}^{d}.$ Let $E^{1}$ denote the closure of the set $E$ in $\mathbb R^{+}$ and let $\tilde x\in\tilde E_{0}^{d}.$ It follows at once from Lemma~\ref{Lem1.7}
that
$R^{*}(\mathbf{\Omega_{0}^{E}(n)})=R^{*}(\mathbf{\Omega_{0}^{E^{1}}(n)}).$ Consequently
\eqref{L4.22} holds if $C(\tilde x)\le R^{*}(\mathbf{\Omega_{0}^{E^{1}}(n)}).$ By
Lemma~\ref{lem4.5}, inequality \eqref{L4.21} implies that $E\in \textbf{\emph{CSP}}$. Hence, by Lemma~\ref{Lem2.3}, there is $\{(a_n, b_n)\}_{n\in\mathbb
N}\in\tilde I_{E}^{d}$ such that \begin{equation}\label{L4.23}
 \limsup_{n\to\infty}\frac{a_n}{x_n}<\infty
\end{equation} and $a_n \ge x_n$ for sufficiently large $n.$ Inequality \eqref{L4.23}
implies the equality
\begin{equation}\label{L4.24}
 C(\tilde x)=\limsup_{n\to\infty}\frac{a_n}{x_n},
\end{equation} (see Remark~\ref{R1}). Let $(n_j)_{j\in\mathbb N}$ be an infinite increasing sequence for which
\begin{equation}\label{L4.25}
 \lim_{j\to\infty}\frac{a_{n_j}}{x_{n_j}}=\limsup_{n\to\infty}\frac{a_n}{x_n}.
\end{equation} Define $r_j :=x_{n_j},$ $\tilde r:=(r_j)_{j\in\mathbb N}$ and
$t_j:=a_{n_j},$ $\tilde t:=(t_j)_{j\in\mathbb N}.$ It is clear that $\tilde r$ is a
normal scaling sequence. Relation \eqref{L4.23} and \eqref{L4.25} imply that $\tilde
t$ and $\tilde 0=(0,0,...,0,...)$ are mutually stable w.r.t. $\tilde r.$ Let $\tilde
E_{0, \tilde r}^{1}$ be a maximal (in $\tilde E^{1}$) self-stable family containing
$\tilde t$ and $\tilde 0.$ Using \eqref{L4.1}, \eqref{L4.24} and \eqref{L4.25}, we
obtain $$R^{*}(\mathbf{\Omega_{0}^{E^{1}}(n)})\ge\sup_{\tilde y\in\tilde E_{0,\tilde
r}^{1}}\tilde d_{\tilde r}(\tilde y, \tilde 0)\ge \tilde d_{\tilde r}(\tilde t, \tilde
0)=C(\tilde x).
$$ Hence \eqref{L4.20} holds that completes the proof of \eqref{L4.15}.
\end{proof}
The following theorem gives the necessary and sufficient conditions under which $\mathbf{\Omega_{p}^{X}(n)}$ is uniformly bounded.
\begin{theorem}\label{th3.10}
Let $(X,d,p)$ be a pointed metric space and let $E=S_{p}(X).$ The
family $\mathbf{\Omega_{p}^{X}(n)}$ is uniformly bounded if and only if
$E\in \textbf{CSP}.$ If $\mathbf{\Omega_{p}^{X}(n)}$ is
uniformly bounded and $p\in ac X$, then
\begin{equation}\label{L4.27}
R^{*}(\mathbf{\Omega_{p}^{X}(n)})=M(\tilde L)
\end{equation}
where $\tilde L$ is an universal element of $(\tilde
I_{E}^{d}, \preceq)$ and $M(\tilde L)$ is defined by \eqref{L13}.
\end{theorem}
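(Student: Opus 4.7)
The plan is to reduce the statement about the abstract pointed space $(X,d,p)$ to a statement about the distance set $E=S_p(X)\subseteq\mathbb R^{+}$, and then to chain together the previous results. I would start by splitting into the two cases $p\notin ac X$ and $p\in ac X$, because the first is essentially degenerate while the second is where all the machinery is deployed.

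In the case $p\notin ac X$ the point $p$ is isolated in $X$, so $0$ is isolated in $E$. The remark following Definition~\ref{D2*} therefore gives $E\in\textbf{\emph{CSP}}$ for free. On the other hand, any $(x_n)\in\tilde X$ with $x_n\to p$ must equal $p$ from some index on, so $d(x_n,p)/r_n=0$ eventually and \eqref{L4.4*} cannot hold for any scaling $\tilde r$; hence no normal scaling sequence exists, $\mathbf{\Omega_{p}^{X}(n)}=\varnothing$, and uniform boundedness holds vacuously. So in this case both sides of the equivalence are true.

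The substantive case is $p\in ac X$, equivalently $0\in ac E$. Here I would simply chain Proposition~\ref{Pr4.4}, Proposition~\ref{Pr4.6} and Proposition~\ref{P2}: we have
\begin{equation*}
R^{*}(\mathbf{\Omega_{p}^{X}(n)})=R^{*}(\mathbf{\Omega_{0}^{E}(n)})=C_{E},
\end{equation*}
and $C_{E}<\infty$ if and only if $E\in\textbf{\emph{CSP}}$. Uniform boundedness of $\mathbf{\Omega_{p}^{X}(n)}$ is by definition the finiteness of $R^{*}(\mathbf{\Omega_{p}^{X}(n)})$, so the equivalence with $E\in\textbf{\emph{CSP}}$ follows immediately.

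For the quantitative formula \eqref{L4.27}, assume $\mathbf{\Omega_{p}^{X}(n)}$ is uniformly bounded and $p\in ac X$. Then $E\in\textbf{\emph{CSP}}$ with $0\in ac E$, so every element of $\textbf{\emph{CSP}}$ is in particular strongly porous at $0$, and Theorem~\ref{ImpTh} guarantees the existence of a universal $\tilde L\in\tilde I_{E}^{d}$ with $M(\tilde L)<\infty$. Lemma~\ref{Lem2.15} then yields $M(\tilde L)=C_{E}$, and combining with the chain of equalities above gives $R^{*}(\mathbf{\Omega_{p}^{X}(n)})=M(\tilde L)$. Since all the deep work has already been done in Propositions~\ref{Pr4.4} and~\ref{Pr4.6} and in Lemma~\ref{Lem2.15}, the only point requiring a little care is the bookkeeping in the degenerate case $p\notin ac X$, where one must read the definitions carefully to confirm that $\mathbf{\Omega_{p}^{X}(n)}$ is genuinely empty and that the suprema in \eqref{L4.1} behave as expected.
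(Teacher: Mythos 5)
Your proposal is correct and follows essentially the same route as the paper: reduce to the distance set via Proposition~\ref{Pr4.4}, identify $R^{*}(\mathbf{\Omega_{0}^{E}(n)})$ with $C_E$ via Proposition~\ref{Pr4.6}, invoke Proposition~\ref{P2} for the \textbf{\emph{CSP}} equivalence, and use Theorem~\ref{ImpTh} together with Lemma~\ref{Lem2.15} for the equality $R^{*}(\mathbf{\Omega_{p}^{X}(n)})=M(\tilde L)$. The only difference is that you spell out the degenerate case $p\notin ac X$, which the paper dismisses as trivial.
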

\begin{proof} The theorem is trivial if $p$ is an isolated point of $X,$ so that we assume $p\in ac X.$ By Proposition~\ref{Pr4.4}, $\mathbf{\Omega_{p}^{X}(n)}$ is uniformly bounded
if and only if $\mathbf{\Omega_{0}^{E}(n)}$ is uniformly bounded. Since $C_E
=R^{*}(\mathbf{\Omega_{0}^{E}(n)})$ (see \eqref{L4.15}), $\mathbf{\Omega_{0}^{E}(n)}$ is
uniformly bounded if and only if $C_{E}<\infty.$ Using Proposition~\ref{P2} we obtain
that $\mathbf{\Omega_{0}^{E}(n)}$ is uniformly bounded if and only if $E\in \textbf{\emph{CSP}}.$

Let us prove that \eqref{L4.27} holds if $\mathbf{\Omega_{p}^{X}(n)}$ is uniformly
bounded and $p\in ac X$. In this case, as was proved above, $E\in \textbf{\emph{CSP}}.$
Consequently, by Theorem~\ref{ImpTh}, there is an universal element $\tilde L
\in\tilde I_{E}^{d}$ such that $M(\tilde L)<\infty.$
Lemma~\ref{Lem2.15} implies that
\begin{equation}\label{L4.29}
M(\tilde L)=C_E.
\end{equation}
By Proposition~\ref{Pr4.6}, we also have the equality
\begin{equation}\label{L4.30}
C_E=R^{*}(\mathbf{\Omega_{0}^{E}(n)}).
\end{equation}
Since $R^{*}(\mathbf{\Omega_{0}^{E}(n)})=R^{*}(\mathbf{\Omega_{p}^{X}(n)}),$ equalities \eqref{L4.29} and
\eqref{L4.30} imply \eqref{L4.27}.
\end{proof}

\begin{remark}
It is known (see \cite{dak}) that a bounded tangent space to $X$ at $p$ exists if and only if $S_{p}(X)=\{d(x,p): x\in X\}$
is strongly porous at 0. The necessary and sufficient conditions under which all pretangent spaces to $X$ at $p$ are bounded also formulated in terms of the local porosity of the set $S_{p}(X)$ (see \cite{bd1} for details).
\end{remark}

Theorem~\ref{th3.10} is an example of translation of some results related to completely strongly porous at 0 sets on the language of pretangent spaces. For more examples of such translation see Theorem~\ref{th4.4.5} and Proposition~\ref{Pr5.4} of the paper.





\begin{proposition}\label{Pr3.13}
Let $(X, d, p)$ be a pointed metric space and let $\mathbf{\Omega_{p}^{X}(n)}\ne\varnothing.$ If the family  $\mathbf{\Omega_{p}^{X}(n)}$ is uniformly bounded, then there is $\Omega_{p, \tilde r}^{X}\in\mathbf{\Omega_{p}^{X}(n)}$ such that the equality
\begin{equation}\label{eq3.28}
R^{*}\left(\mathbf{\Omega_{p}^{X}(n)}\right)=\rho^{*}\left(\Omega_{p, \tilde r}^{X}\right)
\end{equation}
holds.
\end{proposition}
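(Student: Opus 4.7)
The plan is to locate, within the normal-scaling family $\mathbf{\Omega_p^X(n)}$, a single pretangent space whose diameter equals the supremum $R^*(\mathbf{\Omega_p^X(n)})$, by exploiting Theorem~\ref{th3.10}: since $\mathbf{\Omega_p^X(n)} \ne \varnothing$ forces $p \in ac X$ (normal scaling sequences do not exist otherwise), the set $E := S_p(X)$ belongs to $\textbf{CSP}$ and $R^*(\mathbf{\Omega_p^X(n)}) = M(\tilde L) < \infty$ for some universal $\tilde L = \{(l_n, m_n)\}_{n \in \mathbb N} \in \tilde I_E^d$. It therefore suffices to exhibit one normal scaling $\tilde r$ together with sequences $\tilde y, \tilde p \in \tilde X$ mutually stable with respect to $\tilde r$ and satisfying $\tilde d_{\tilde r}(\tilde y, \tilde p) = M(\tilde L)$; Zorn's lemma will then extend $\{\tilde p, \tilde y\}$ to a maximal self-stable family producing some $\Omega_{p, \tilde r}^X$ with $\rho^*(\Omega_{p, \tilde r}^X) \ge M(\tilde L)$, and the reverse inequality is automatic from the definition of $R^*$.

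For the construction I first pick a strictly increasing $(n_k)_{k \in \mathbb N}$ realising $l_{n_k}/m_{n_k+1} \to M(\tilde L)$. Because the components $(l_n, m_n)$ of $Ext\, E$ are pairwise disjoint and $(l_n)$ is eventually decreasing, one sees that $m_{n+1} \le l_n$ for large $n$ and, more importantly, that $(m_{n+1})$ is eventually strictly decreasing; hence so is the subsequence $(m_{n_k+1})_k$. Although $l_{n_k}$ and $m_{n_k+1}$ lie only in $\overline E$, the set $E$ accumulates on each $l_n$ from the left and on each $m_n$ from the right, so I next choose $s_k \in E$ with $s_k \le l_{n_k}$ and $s_k/l_{n_k} \to 1$, and $r_k \in E$ with $r_k \ge m_{n_k+1}$ and $r_k/m_{n_k+1} \to 1$, tightening the tolerances so that $r_{k+1} < r_k$. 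The outputs are $(r_k)$ eventually decreasing with $r_k \to 0$, and $s_k/r_k \to M(\tilde L)$.

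Since $r_k, s_k \in E = S_p(X)$, I select $x_k, y_k \in X$ with $d(x_k, p) = r_k$ and $d(y_k, p) = s_k$. The sequence $\tilde x = (x_k)$ then witnesses that $\tilde r := (r_k)$ is a normal scaling sequence (Definition~\ref{D4.2}), while $\tilde y = (y_k)$ and $\tilde p$ are mutually stable with respect to $\tilde r$ and satisfy $\tilde d_{\tilde r}(\tilde y, \tilde p) = \lim_k s_k/r_k = M(\tilde L)$. Extending $\{\tilde p, \tilde y\}$ to a maximal self-stable family $\tilde X_{p, \tilde r}$ via Zorn's lemma and passing to the metric identification yields the pretangent space $\Omega_{p, \tilde r}^X$ in which $\rho(\alpha, \pi(\tilde y)) = M(\tilde L)$; combined with the trivial bound $\rho^*(\Omega_{p, \tilde r}^X) \le R^*(\mathbf{\Omega_p^X(n)})$, this delivers \eqref{eq3.28}.

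The step I expect to be the most delicate is the simultaneous approximation in the second paragraph: the sequences $(l_{n_k})$ and $(m_{n_k+1})$ must be replaced by elements of $E$ so that the ratio still tends to $M(\tilde L)$ and the resulting $\tilde r$ remains eventually decreasing. This reduces to observing that consecutive values $m_{n_k+1}$ are separated by a positive gap (by the eventual strict monotonicity of $(m_j)$), so the tolerance in each approximation can be shrunk below this gap; no sharper estimate is needed.
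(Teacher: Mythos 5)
Your proposal is correct and follows essentially the same route as the paper's proof: reduce to $R^{*}(\mathbf{\Omega_{p}^{X}(n)})=M(\tilde L)$ via Theorem~\ref{th3.10}, realize the $\limsup$ along a subsequence of the universal $\tilde L$, approximate the endpoints $l_{n_k}$ and $m_{n_k+1}$ by points of $E=S_p(X)$, lift these to points of $X$, and extend $\{\tilde p,\tilde y\}$ to a maximal self-stable family by Zorn's lemma, with the reverse inequality $\rho^{*}\le R^{*}$ being automatic. The only cosmetic difference is that you take the scaling sequence inside $E$ near $m_{n_k+1}$ while the paper uses $r_k=m_{n(k)+1}$ itself together with a decreasing sequence $(s_k)$ in $E$ witnessing normality; both handle the eventual monotonicity needed for Definition~\ref{D4.2} correctly.
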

\begin{proof}
Suppose that $\mathbf{\Omega_{p}^{X}(n)}$ is uniformly bounded. Write $E:=S_{p}(X).$ In the correspondence with Theorem~\ref{th3.10}, we have the equality
\begin{equation}\label{eq3.29}
R^{*}\left(\mathbf{\Omega_{p}^{X}(n)}\right)=M(\tilde L),
\end{equation}
where $\tilde L=\{(l_n, m_n)\}_{n\in\mathbb N}\in\tilde I_{E}^{d}$ is an universal element of $(\tilde I_{E}^{d}, \preceq)$ and
\begin{equation*}
M(\tilde L)=\limsup_{n\to\infty}\frac{l_n}{m_{n+1}}<\infty.
\end{equation*}
Let us consider a subsequence $\{(l_{n_k}, m_{n_k})\}_{k\in\mathbb N}$ of the sequence $\tilde L$ such that
\begin{equation}\label{eq3.30}
\lim_{k\to\infty}\frac{l_{n(k)}}{m_{n(k)+1}}=M(\tilde L).
\end{equation}
Since each $(l_n, m_n)$ is a connected component of $Ext E=Int(\mathbb R^{+}\setminus E),$ there is a sequence $(t_k)_{k\in\mathbb N}$ such that for every $k\in\mathbb N$, $t_k\in E$ and
\begin{equation}\label{eq3.31}
\lim_{k\to\infty}\frac{t_k}{l_{n(k)}}=1.
\end{equation}
Limit relations \eqref{eq3.30} and \eqref{eq3.31} imply
\begin{equation}\label{eq3.32}
\lim_{k\to\infty}\frac{t_k}{m_{n(k)+1}}=M(\tilde L).
\end{equation}
Passing, if it is necessary, to a subsequence again we may suppose that there is a decreasing sequence $(s_k)_{k\in\mathbb N}$ such that
\begin{equation}\label{eq3.33}
\lim_{k\to\infty}\frac{s_k}{m_{n(k)+1}}=1
\end{equation}
and $s_k\in E$ for every $k\in\mathbb N.$
Write $r_k:=m_{n(k)+1}$ for every $k\in\mathbb N.$ Let $(y_k)_{k\in\mathbb N}\in\tilde X$ be a sequence such that $t_k=d(p, y_k),$ $k\in\mathbb N.$ The triangle inequality implies that $$d(x_k, y_k)\le d(x_k, p)+d(p, y_k).$$ The last inequality, \eqref{eq3.32} and \eqref{eq3.33} imply that
\begin{equation*}
\limsup_{k\to\infty}\frac{d(x_k, y_k)}{r_k}\le\lim_{k\to\infty}\frac{d(x_k, p)}{r_k}+\lim_{k\to\infty}\frac{d(y_k, p)}{r_k}=1+M(\tilde L)<\infty.
\end{equation*}
Hence, the sequence $\left(\frac{d(x_k, y_k)}{r_k}\right)_{k\in\mathbb N}$ contains a convergent subsequence. Without loss of generality we may suppose that there is a finite limit
\begin{equation*}
\lim_{k\to\infty}\frac{d(x_k, y_k)}{r_k}.
\end{equation*}
Thus, the family $\{\tilde x, \tilde y, \tilde p\},$ where $\tilde x=(x_k)_{k\in\mathbb N},\, \tilde y=(y_k)_{k\in\mathbb N}\, \, \text{and}\,\, \tilde p=(p, p, ...)$ is self-stable. By the Zorn lemma, there is a maximal self-stable family $\tilde X_{p, \tilde r}$ such that
\begin{equation}\label{eq3.34}
\{\tilde x, \tilde y, \tilde p\}\subseteq\tilde X_{p, \tilde r}.
\end{equation}
Let $\Omega_{p, \tilde r}^{X}$ be the pretangent space obtained by metric identification of $\tilde X_{p, \tilde r}.$ Since $(s_k)_{k\in\mathbb N}$ is decreasing and equation \eqref{eq3.33} holds, the scaling sequence $(r_k)_{k\in\mathbb N}$ is normal, i.e. $\Omega_{p, \tilde r}^{X}\in\mathbf{\Omega_{p}^{X}(n)}.$ Let $\alpha=\pi(\tilde p),$ $\tilde p=(p, p, p, ...),$ and $\beta=\pi(\tilde y),$ where $\pi$ is the natural projection of $\tilde X_{p, \tilde r}$ on $\Omega_{p, \tilde r}^{X}.$ It follows from \eqref{eq3.32}, that
\begin{equation*}
\rho^{*}(\Omega_{p, \tilde r}^{X})=\sup_{\gamma\in\Omega_{p, \tilde r}^{X}}\rho(\alpha, \gamma)\ge\rho(\alpha, \beta)=M(\tilde L).
\end{equation*}
The inequality $\rho^{*}(\Omega_{p, \tilde r}^{X})\ge M(\tilde L)$ and \eqref{L4.20} imply
$
R^{*}(\mathbf{\Omega_{p}^{X}(n)})\le\rho^{*}(\Omega_{p, \tilde r}^{X}).
$
The converse inequality is trivial. Equality \eqref{eq3.28} follows.
\end{proof}

\begin{remark}
If $\mathbf{\Omega_{p}^{X}(n)}$ is uniformly bounded and nonempty, then it can be proved that there is $\Omega_{p, \tilde r}^{X}\in\mathbf{\Omega_{p}^{X}(n)}$ for which the equality
\begin{equation*}
\textrm{diam}\left(\Omega_{p, \tilde r}^{X}\right)=\sup_{\Omega\in\mathbf{\Omega_{p}^{X}(n)}} \textrm{diam}\left(\Omega\right)
\end{equation*}
holds.
\end{remark}

\section {Uniform boundedness and uniform discreteness}
\hspace*{\parindent} Let $I$ be an index set and let $\mathfrak F=\{(X_{i}, d_{i}, p_i): i \in I\}$ be a nonempty family
of pointed metric spaces.  We set
\begin{equation}\label{4.4.1}
\rho_{*}(X_i):=\begin{cases}
         \inf\{d_i(x,p_i): x\in X_{i}\setminus\{p_i\}\} & \mbox{if} $ $ X_i \ne \{p_i\}\\
         +\infty & \mbox{if}$ $ X_i = \{p_i\}\\
         \end{cases}
\end{equation}
for $i\in I$ and write
$$
R_{*}(\mathfrak F):=\mathop{\inf}\limits_{i\in I}\rho_{*}(X_i).
$$
If $R_{*}(\mathfrak F)>0,$ then we say that $\mathfrak F$ is \emph{uniformly discrete} (w.r.t. the points $p_i$)

As in Proposition~\ref{Pr4.1} it is easy to show that the family $\mathbf {\Omega_{p}^{X}}$ of all pretangent spaces is uniformly discrete if and only if $p$ is an isolated point of the metric space $X.$ Thus, it make sense to consider $\mathbf {\Omega_{p}^{X}(n)}.$

\begin{theorem}\label{th4.4.1}
Let $(X,d,p)$ be a pointed metric space such that $\mathbf {\Omega_{p}^{X}(n)}\ne\varnothing.$ Then $\mathbf {\Omega_{p}^{X}(n)}$ is uniformly discrete if and only if it is uniformly bounded. If $\mathbf {\Omega_{p}^{X}(n)}$ is uniformly bounded, then the equality
\begin{equation}\label{4.4.3}
R_{*}(\mathbf {\Omega_{p}^{X}(n)})=\frac{1}{R^{*}(\mathbf {\Omega_{p}^{X}(n)})}
\end{equation} holds.
\end{theorem}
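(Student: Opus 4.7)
My plan is to represent both $R^*(\mathbf{\Omega_{p}^{X}(n)})$ and $R_*(\mathbf{\Omega_{p}^{X}(n)})$ as the supremum and infimum of a single set $D\subseteq(0,\infty)$ of achievable positive distances, and then to prove that $D$ is invariant under $L\mapsto 1/L$. The identity $R_* = 1/R^*$ will follow from $\sup D = 1/\inf D$, and since $R^*\geq 1$ whenever $\mathbf{\Omega_{p}^{X}(n)}\neq\varnothing$, the equivalence of uniform boundedness and uniform discreteness will be an immediate consequence.

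I first note that normality of a scaling sequence forces $p\in\mathrm{ac}\,X$, so I may assume this throughout. Define
\[
D := \bigl\{L>0 : \exists\, \tilde r \text{ normal},\ \exists\, \tilde x\in\tilde X \text{ with }\lim_{n\to\infty} d(x_n,p)/r_n = L\bigr\}.
\]
For any such pair $(\tilde r,\tilde x)$ the two-element family $\{\tilde p,\tilde x\}$ is self-stable w.r.t.\ $\tilde r$, so by Zorn's lemma it extends to a maximal self-stable family whose pretangent space lies in $\mathbf{\Omega_{p}^{X}(n)}$ and contains a point at distance $L$ from $\alpha$. Conversely, every positive distance in every member of $\mathbf{\Omega_{p}^{X}(n)}$ arises this way. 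Applying Proposition~\ref{prop}$(i_3)$ to any normal $\tilde r$ produces a companion $(x'_n)\in\tilde X$ with $d(x'_n,p)/r_n\to 1$, which shows that $1\in D$ and also that every $\Omega\in\mathbf{\Omega_{p}^{X}(n)}$ contains points other than $\alpha$ (so each $\rho_*(\Omega)<\infty$). These observations together yield $R^*(\mathbf{\Omega_{p}^{X}(n)}) = \sup D$ and $R_*(\mathbf{\Omega_{p}^{X}(n)}) = \inf D$.

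The main step, and the one I expect to be the principal obstacle, is to show $D = D^{-1} := \{1/L : L\in D\}$. Given $L\in D$ witnessed by $(\tilde r,\tilde x)$, I invoke Proposition~\ref{prop}$(i_3)$ for $\tilde r$ to obtain $(x'_n)\in\tilde X$ with $d(x'_n,p)/r_n\to 1$, and extract a subsequence $(n_k)$ along which $(d(x_{n_k},p))_k$ is strictly decreasing. Setting $s_k := d(x_{n_k},p)$ and $\tilde z := (x'_{n_k})$, the sequence $\tilde s = (s_k)$ is eventually decreasing with $s_k\to 0$, and the choice $y_k := x_{n_k}\in X$ satisfies $d(y_k,p)/s_k\equiv 1$, so $\tilde s$ is normal. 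A short computation then gives
\[
\frac{d(z_k,p)}{s_k} = \frac{d(x'_{n_k},p)/r_{n_k}}{d(x_{n_k},p)/r_{n_k}} \longrightarrow \frac{1}{L},
\]
which exhibits $1/L\in D$. The conceptual point is that the normality condition is symmetric enough to allow swapping the roles of ``scaling witness'' $\tilde r$ and ``test sequence'' $\tilde x$; the formal verification of normality for $\tilde s$ is painless precisely because $\tilde s$ serves as its own witness.

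Combining the previous steps, $D = D^{-1}$ yields $\sup D = 1/\inf D$, i.e., $R^*(\mathbf{\Omega_{p}^{X}(n)})\cdot R_*(\mathbf{\Omega_{p}^{X}(n)}) = 1$ under the conventions $1/0 = \infty$ and $1/\infty = 0$. Hence $R^* < \infty$ iff $R_* > 0$, so uniform boundedness and uniform discreteness are equivalent, and in either case equality \eqref{4.4.3} is exactly the reciprocal identity.
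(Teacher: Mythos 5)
Your proof is correct, but it takes a genuinely different route from the paper's. The paper deduces Theorem~\ref{th4.4.1} from a general principle, Theorem~\ref{th4.4.3}: for any weakly self-similar family of pointed metric spaces with nonempty unit spheres, uniform boundedness and uniform discreteness are equivalent and $R_{*}=1/R^{*}$; the reduction to that setting is Lemma~\ref{l4.4.4}, which uses Lemma~\ref{Lem1.6} to pass from each $\Omega_{p,\tilde r}^{X}\in\mathbf{\Omega_{p}^{X}(n)}$ to a pretangent space in $^{\textbf{1}}\mathbf{\Omega_{p}^{X}}$ along a subsequence without decreasing $\rho^{*}$ or increasing $\rho_{*}$, giving $R^{*}(^{\textbf{1}}\mathbf{\Omega_{p}^{X}})=R^{*}(\mathbf{\Omega_{p}^{X}(n)})$ and $R_{*}(^{\textbf{1}}\mathbf{\Omega_{p}^{X}})=R_{*}(\mathbf{\Omega_{p}^{X}(n)})$. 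You instead encode both quantities as the supremum and infimum of the single set $D$ of achievable limits $\lim_{n}d(x_n,p)/r_n$ over normal $\tilde r$, and obtain the inversion symmetry $D=D^{-1}$ by swapping roles: the witness of normality becomes the test sequence, and the distances of the test sequence become the new scaling sequence, which is automatically normal because it witnesses itself. This is more elementary --- it needs only two-element self-stable families and Zorn's lemma, bypassing Lemma~\ref{Lem1.6} and the self-similarity machinery --- and it makes the origin of the reciprocal identity transparent: $D=D^{-1}$ is the concrete form of the weak self-similarity that the paper exploits abstractly; what the paper's route buys in exchange is the reusable general statement about weakly self-similar families. One small imprecision: the companion sequence with ratio tending to $1$ produced by Proposition~\ref{prop}$(i_3)$ need not belong to an arbitrarily prescribed maximal self-stable family, so your parenthetical claim that every $\Omega\in\mathbf{\Omega_{p}^{X}(n)}$ contains points other than $\alpha$ is not justified as written; however, that claim is not actually needed for $R_{*}=\inf D$ (a degenerate space contributes $+\infty$ to the infimum and cannot lower it), and it is in any case true, since a sequence with $d(x_n,p)/r_n\to 1$ is mutually stable with every sequence at zero distance from $\tilde p$, so a maximal family consisting only of such sequences would contradict maximality.
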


\begin{remark}\label{r4.4.2}
The condition $\mathbf {\Omega_{p}^{X}(n)}\ne\varnothing$ implies $R^{*}(\mathbf {\Omega_{p}^{X}(n)})>0.$
\end{remark}

We will prove Theorem~\ref{th4.4.1} in some more general setting.

Let $(X,d,p)$ be a pointed metric space and let $t>0.$ Write $t X$ for the pointed metric space $(X, t d, p)$, i.e. $t X$ is the pointed metric space with the same underluing set $X$ and the marked point $p,$ but equipped with the new metric $t d$ instead of $d$.

\begin{definition}\label{D4.4.2}
Let $\mathfrak F$ be a nonempty family of pointed metric spaces. $\mathfrak F$ is weakly self-similar if for every $(Y,d,p)\in\mathfrak F$ and every nonzero $t \in S_{p}(Y)$ the space $\frac{1}{t}Y$ belongs to $\mathfrak F.$
\end{definition}

\begin{theorem}\label{th4.4.3}
Let $\mathfrak F=\{(X_i, d_i, p_i): i\in I\}$ be a weakly self-similar family of pointed metric spaces. Suppose that the sphere
\begin{equation*}
S_{i}=\{x\in X_i: d_{i}(x, p_i)=1\}
\end{equation*}
is nonvoid for every $i\in I.$ Then $\mathfrak F$ is uniformly bounded if and only if $\mathfrak F$ is uniformly discrete w.r.t. the marked points $p_i, i\in I.$ If $\mathfrak F$ is uniformly bounded, then the equality
\begin{equation}\label{4.4.4}
R_{*}(\mathfrak F)=\frac{1}{R^{*}(\mathfrak F)}
\end{equation}
holds.
\end{theorem}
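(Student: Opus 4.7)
The plan is to prove, in a single stroke, the quantitative identity $R_*(\mathfrak F)\cdot R^*(\mathfrak F)=1$ by two opposite inequalities obtained via the same mechanism. Weak self-similarity lets me convert any positive distance $t$ from a basepoint into a legitimate rescaling factor $\tfrac{1}{t}$, while the sphere condition pins $1$ as a universal lower bound for $\rho^*$ and an upper bound for $\rho_*$. The equivalence of uniform boundedness and uniform discreteness will fall out immediately from the identity.

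I would begin with the a priori bounds from the sphere condition: since each $S_i$ contains a point at distance exactly $1$ from $p_i$, we have $\rho^*(X_i)\geq 1\geq\rho_*(X_i)$ for every $i\in I$, whence $R^*(\mathfrak F)\geq 1$ and $R_*(\mathfrak F)\leq 1$. In particular $R^*(\mathfrak F)>0$ and no $X_i$ is a singleton, so $\rho_*(X_i)$ is a genuine infimum.

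Next I would establish $R_*(\mathfrak F)\cdot R^*(\mathfrak F)\geq 1$. Fix $i\in I$ and $x\in X_i\setminus\{p_i\}$, and set $t:=d_i(x,p_i)>0$. Weak self-similarity places $\tfrac{1}{t}X_i$ in $\mathfrak F$, and rescaling the metric gives $\rho^*(\tfrac{1}{t}X_i)=\rho^*(X_i)/t$. The inequality $\rho^*(X_i)/t\leq R^*(\mathfrak F)$ then forces $t\geq \rho^*(X_i)/R^*(\mathfrak F)\geq 1/R^*(\mathfrak F)$ by the sphere bound. Taking the infimum over $x$ and then over $i$ yields $R_*(\mathfrak F)\geq 1/R^*(\mathfrak F)$. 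The symmetric step, using $\rho_*(\tfrac{1}{t}X_i)=\rho_*(X_i)/t\geq R_*(\mathfrak F)$ (applicable whenever $R_*(\mathfrak F)>0$), gives $t\leq \rho_*(X_i)/R_*(\mathfrak F)\leq 1/R_*(\mathfrak F)$; taking supremum over $x$ and $i$ produces $R^*(\mathfrak F)\leq 1/R_*(\mathfrak F)$.

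To conclude: if $\mathfrak F$ is uniformly bounded, the first inequality forces $R_*(\mathfrak F)\geq 1/R^*(\mathfrak F)>0$, so $\mathfrak F$ is uniformly discrete, and the reverse inequality then collapses both bounds into the equality \eqref{4.4.4}; conversely, if $\mathfrak F$ is uniformly discrete, the second inequality directly yields $R^*(\mathfrak F)\leq 1/R_*(\mathfrak F)<\infty$. I do not anticipate a real obstacle beyond carefully verifying that the rescaling $Y\mapsto \tfrac{1}{t}Y$ is legitimate — which is precisely the content of Definition~\ref{D4.4.2} together with $t\in S_{p_i}(X_i)\setminus\{0\}$ — and that $\rho^*$ and $\rho_*$ transform homogeneously under this operation, which is immediate from their definitions.
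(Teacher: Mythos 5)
Your proof is correct, and it is organized differently from the paper's. The paper first proves the equivalence of uniform boundedness and uniform discreteness by contradiction (extracting a sequence $x_{i_k}$ with $t_k=d_{i_k}(x_{i_k},p_{i_k})\to 0$, rescaling by $t_k^{-1}$ and using the unit-sphere points to force $R^*(\mathfrak F)=\infty$), and then establishes \eqref{4.4.4} separately via an auxiliary scale-invariant quantity $Q(\mathfrak F)=\sup_{i}\rho^{*}(X_i)/\rho_{*}(X_i)$, shown to equal both $R^{*}(\mathfrak F)$ and $1/R_{*}(\mathfrak F)$ by choosing near-extremal indices and normalizing so that $\rho_{*}(X_{i_m})\to 1$. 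You instead extract the two pointwise inequalities $\rho_{*}(X_i)\ge 1/R^{*}(\mathfrak F)$ and $\rho^{*}(X_i)\le 1/R_{*}(\mathfrak F)$ directly, by rescaling $X_i$ by $1/t$ for each individual distance $t=d_i(x,p_i)$ and using $\rho^{*}(X_i)\ge 1\ge\rho_{*}(X_i)$; the equivalence and the equality then both fall out at once. The underlying mechanism (weak self-similarity plus the unit sphere as a normalizer) is the same, but your version avoids the sequence extraction, the intermediate quantity $Q$, and the "may assume $\rho_{*}(X_{i_m})\to 1$" step, so it is shorter and arguably cleaner; you were also appropriately careful that the division by $R^{*}$ (resp.\ $R_{*}$) is only invoked when that quantity is finite (resp.\ positive), which is exactly where each inequality is used.
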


\begin{proof}
Assume that $\mathfrak F$ is uniformly bounded but not uniformly discrete. Then there is a sequence $(x_{i_k})_{k\in\mathbb N}$ such that

\begin{equation*}\label{4.4.5}
x_{i_k}\in X_{i_k}, x_{i_k}\ne p_{i_k} \,\, \mbox{and} \, \, \lim_{k\to\infty}d_{i_k}(x_{i_k}, p_{i_k})=0
\end{equation*}
Since all $S_{i}$ are nonvoid, we can find a sequence $(y_{i_k})_{k\in\mathbb N}$ for which $y_{i_k}\in X_{i_k}$ and
$
d_{i_k}(y_{i_k}, p_{i_k})= 1
$
for every $k\in\mathbb N.$  Define $t_k$ to be $d_{i_k}(x_{i_k}, p_{i_k}), k\in\mathbb N.$ Since $\mathfrak F$ is weakly self-similar,
$
t_{k}^{-1}X_{i_k}\in\mathfrak F
$
holds for every $k\in\mathbb N.$ Now we obtain

\begin{equation*}
R^{*}(\mathfrak F)\ge\limsup_{k\to\infty}t_{k}^{-1}d_{i_k}(y_{i_k}, p_{i_k})=\limsup_{k\to\infty}t_{k}^{-1}=\infty.
\end{equation*}
Hence $\mathfrak F$ is not uniformly bounded, contrary to the assumption. Therefore if $\mathfrak F$ is uniformly bounded, then $\mathfrak F$ is uniformly discrete. Similarly we can prove that the uniform discreteness of $\mathfrak F$ implies the uniform boundedness of this family.

Suppose now that
$
R^{*}(\mathfrak F)<\infty.
$
Let us prove equality \eqref{4.4.4}. Define a quantity $Q(\mathfrak F)$ by the rule

\begin{equation*}
Q(\mathfrak F)=\sup_{i\in I}\frac{\rho^{*}(X_i)}{\rho_{*}(X_i)}
\end{equation*}
where $\rho_{*}(X_i)$ is defined by \eqref{4.4.1} and $\rho^{*}(X_i)$ by \eqref{L4.1}.The first part of the theorem implies that $\mathfrak F$ is uniformly discrete. Hence $R_{*}(\mathfrak F)>0,$ that implies $$\rho_{*}(X_i)>0, i\in I.$$ Moreover, the inequality $R^{*}(\mathfrak F)<\infty$ gives us the condition $\rho_{*}(X_i)<\infty.$ Thus $Q(\mathfrak F)$ is correctly defined.
We claim that the equality
$
Q(\mathfrak F)=R^{*}(\mathfrak F)
$
holds. Indeed let $(i_k)_{k\in\mathbb N}$ be a sequence of indexes $i_k\in I$ such that

\begin{equation}\label{4.4.10}
\lim_{k\to\infty}\rho_{*}(X_{i_k})=R^{*}(\mathfrak F).
\end{equation}
Since all $S_{i}$ are nonvoid, we have $\rho_{*}(X_{i_k})\le 1$ for every $X_{i_k}.$ Consequently
\begin{equation}\label{4.4.11}
Q(\mathfrak F)\ge\limsup_{k\to\infty}\frac{\rho^{*}(X_{i_k})}{\rho_{*}(X_{i_k})}\ge\limsup_{k\to\infty}\rho^{*}(X_{i_k})=\lim_{k\to\infty}\rho^{*}(X_{i_k})\ge R^{*}(\mathfrak F).
\end{equation}
Let us consider a sequence $(i_m)_{m\in\mathbb N}, i_m\in I,$ for which
\begin{equation}\label{4.4.12}
Q(\mathfrak F)=\lim_{m\to\infty}\frac{\rho^{*}(X_{i_m})}{\rho_{*}(X_{i_m})}.
\end{equation}
The quantity $\frac{\rho^{*}(X_{i})}{\rho_{*}(X_{i})}$ is invariant w.r.t. the passage from $X_i$ to $\frac{1}{t}X_{i}$ if $t\in S_{p_i}(X_i).$ Consequently using the uniform discreteness of $\mathfrak F$ and the inequality $\rho_{*}(X_i)\le 1$ (which follows from the condition $S_{i}\ne\varnothing, \, i\in I$), we may assume that

\begin{equation}\label{4.4.13}
\lim_{m\to\infty}\rho_{*}(X_{i_m})=1.
\end{equation}
Limit relations \eqref{4.4.12} and \eqref{4.4.13} imply

\begin{equation*}
Q(\mathfrak F)=\lim_{m\to\infty}\rho_{*}(X_{i_m})\le R^{*}(\mathfrak F).
\end{equation*}
The last inequality and \eqref{4.4.11} give us the equality
$R^{*}(\mathfrak F)=Q(\mathfrak F).$
Reasoning similarly we obtain the equality
$
Q(\mathfrak F)=\frac{1}{R_{*}(\mathfrak F)}.
$
Equality \eqref{4.4.4} follows.
\end{proof}

Let us define a subset $^{\textbf{1}}\mathbf{\Omega_{p}^{X}}$ of the set $\mathbf{\Omega_{p}^{X}}$ of all pretangent spaces to $X$ at $p$ by the rule:
\begin{equation*}\label{4.4.16}
\left(\left(\Omega_{p,\tilde r}^{X}, \rho, \alpha\right)\in \,^{\textbf{1}}\mathbf{\Omega_{p}^{X}}\right)\Leftrightarrow
\left(\{\delta\in\Omega_{p,\tilde r}^{X}: \rho(\alpha,\delta)=1\}\ne\varnothing\right)
\end{equation*}
where $\alpha=\pi(\tilde p)$ is the marked point of the pretangent space $\Omega_{p,\tilde r}^{X}$ and $\rho$ is the metric on $\Omega_{p,\tilde r}^{X}.$
To apply Theorem~\ref{th4.4.3} to the proof of Theorem~\ref{th4.4.1} we need the following
\begin{lemma}\label{l4.4.4}
Let $(X, d, p)$ be a pointed metric space.
If \, $\Omega_{p,\tilde r}^{X}\in\mathbf{\Omega_{p}^{X}(n)},$ then there is $^{1}\Omega_{p,\tilde\mu}^{X}\in\,^{\textbf{\emph{1}}}\mathbf{\Omega_{p}^{X}}$ such that
\begin{equation}\label{4.4.18}
\rho_{*}(^{1}\Omega_{p,\tilde\mu}^{X})\le\rho_{*}(\Omega_{p,\tilde r}^{X})\le\rho^{*}(\Omega_{p,\tilde r}^{X})\le\rho^{*}(^{1}\Omega_{p,\tilde \mu}^{X}).
\end{equation}
Conversely, if $\, ^{1}\Omega_{p, \tilde r}^{X}\in\, ^{\textbf{\emph{1}}}\mathbf{\Omega_{p}^{X}},$ then there is $\Omega_{p, \tilde \mu}^{X}\in\mathbf{\Omega_{p}^{X}(n)}$ such that
\begin{equation}\label{4.4.18*}
\rho_{*}(\Omega_{p,\tilde\mu}^{X})\le\rho_{*}(^{1}\Omega_{p,\tilde r}^{X})\le\rho^{*}(^{1}\Omega_{p,\tilde r}^{X})\le\rho^{*}(\Omega_{p,\tilde \mu}^{X}).
\end{equation}
\end{lemma}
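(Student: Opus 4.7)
The plan is to exploit a rescaling principle: whenever a witness sequence $\tilde x$ satisfies $d(x_n,p)/r_n\to 1$, replacing $\tilde r$ by $\mu_n := d(x_n,p)$ produces a scaling sequence asymptotic to $\tilde r$ under which mutual stability and all pairwise limits are preserved and $\tilde x$ realises distance exactly $1$ from $\tilde p$. Lemma~\ref{Lem1.6} is the workhorse that lets me carry along a countable family of representatives controlling $\rho^{*}$ and $\rho_{*}$ simultaneously; both directions of the lemma will follow the same template.

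For the forward direction, assuming $\Omega_{p,\tilde r}^{X}\in\mathbf{\Omega_{p}^{X}(n)}$, I would first invoke Proposition~\ref{prop}$(i_{3})$ to obtain $(x_n)\in\tilde X$ with $d(x_n,p)/r_n\to 1$ and $(d(x_n,p))$ eventually decreasing. Next, I would choose a countable family $\{\tilde y_k\}_{k\in\mathbb N}\subseteq\tilde X_{p,\tilde r}$ whose projections avoid $\alpha$ and whose distances $\rho(\alpha,\pi(\tilde y_k))$ have supremum $\rho^{*}(\Omega_{p,\tilde r}^{X})$ and infimum $\rho_{*}(\Omega_{p,\tilde r}^{X})$, taking the family empty in the degenerate case $\Omega_{p,\tilde r}^{X}=\{\alpha\}$. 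Applying Lemma~\ref{Lem1.6} to the countable collection $\{\tilde x,\tilde p\}\cup\{\tilde y_k\}$ furnishes a subsequence $\tilde r'$ of $\tilde r$ along which the primed family becomes self-stable. Setting $\mu_k:=d(x'_k,p)$, the relation $\mu_k/r'_k\to 1$ preserves self-stability and all pairwise distances, and the monotonicity of $(d(x_n,p))$ makes $\tilde\mu$ eventually decreasing, hence a scaling sequence. A Zorn extension to a maximal self-stable family followed by metric identification yields $^{1}\Omega_{p,\tilde\mu}^{X}\in\,^{\textbf{1}}\mathbf{\Omega_{p}^{X}}$, with $\pi(\tilde x')$ at distance $1$ from $\alpha$ and each $\pi(\tilde y'_k)$ at its original distance $\rho(\alpha,\pi(\tilde y_k))$, whence \eqref{4.4.18} is immediate.

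For the reverse direction, starting from $^{1}\Omega_{p,\tilde r}^{X}$ I would fix $\tilde x\in\tilde X_{p,\tilde r}$ with $d(x_n,p)/r_n\to 1$ and use Proposition~\ref{prop}$(i_{2})$ to pass to a subsequence $(n_k)$ making $(d(x_{n_k},p))$ decreasing. A countable $\{\tilde y_j\}\subseteq\tilde X_{p,\tilde r}$ capturing $\rho^{*}$ and $\rho_{*}$ of $^{1}\Omega_{p,\tilde r}^{X}$ is automatically self-stable with $\{\tilde x,\tilde p\}$ inside the given family, so no appeal to Lemma~\ref{Lem1.6} is needed here. Setting $\mu_k:=d(x_{n_k},p)$ gives a decreasing scaling sequence with $\mu_k/r_{n_k}\to 1$, and $(x_{n_k})$ itself witnesses the normality of $\tilde\mu$. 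A Zorn extension and metric identification then deliver $\Omega_{p,\tilde\mu}^{X}\in\mathbf{\Omega_{p}^{X}(n)}$ satisfying \eqref{4.4.18*} by the same inspection.

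The main technical obstacle I foresee is the possible non-separability of the given pretangent space: I cannot in general transport the entire family $\tilde X_{p,\tilde r}$ to the new scaling $\tilde\mu$, since $\tilde x$ need not be mutually stable with every element of $\tilde X_{p,\tilde r}$ with respect to $\tilde r$. The countable approximation $\{\tilde y_k\}$ realising both $\rho^{*}$ and $\rho_{*}$ is precisely the device that brings Lemma~\ref{Lem1.6} into range without cardinality obstructions, with a little extra bookkeeping required in the boundary cases $\Omega_{p,\tilde r}^{X}=\{\alpha\}$ and $\rho^{*}(\Omega_{p,\tilde r}^{X})=\infty$ to ensure the countable family continues to pin down the correct extremes.
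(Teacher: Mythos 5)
Your proof is correct and follows essentially the same route as the paper's: a countable family of representatives realising $\rho^{*}$ and $\rho_{*}$ together with a normality witness, Lemma~\ref{Lem1.6} plus a Zorn extension for the forward direction, and Proposition~\ref{prop}$(i_2)$ with the rescaling $\mu_k=d(x_{n_k},p)$ for the converse. The only cosmetic deviation is that in the forward direction the paper keeps $\tilde\mu=\tilde r'$ (the normality witness already sits at distance $1$ from $\alpha$ along the subsequence), whereas you additionally rescale to $(d(x'_k,p))_{k}$; both are fine.
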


\begin{proof}
Let $\Omega_{p, \tilde r}^{X}\in\mathbf{\Omega_{p}^{X}(n)}$ and let
$\tilde X_{p,\tilde r}$ be the maximal self-stable family which metric identification coincides with $\Omega_{p,\tilde r}^{X}.$ We can find some sequences $\tilde a^{i}=(a_n^{i})_{n\in\mathbb N}$ and $\tilde b^{i}=(b_n^{i})_{n\in\mathbb N}, i\in\mathbb N$ such that

\begin{equation}\label{4.4.20}
\lim_{i\to\infty}\rho(\pi(\tilde b^{i}), \alpha)=\rho_{*}(\Omega_{p,\tilde r}^{X})\quad\mbox{and}\quad \lim_{i\to\infty}\rho(\pi(\tilde a^{i}), \alpha)=\rho^{*}(\Omega_{p,\tilde r}^{X}).
\end{equation}
Since $\Omega_{p,\tilde r}^{X}\in\mathbf{\Omega_{p}^{X}(n)},$ the scaling sequence $\tilde r$
is normal. Consequently there is $\tilde c=(c_n)_{n\in\mathbb N}\in\tilde X$ such that $$\lim_{n\to\infty}\frac{d(c_n,p)}{r_n}=1.$$ Let us define a countable family $\mathfrak B\subseteq\tilde X$ as $$\mathfrak B=\{\tilde b^{i}: i\in\mathbb N\}\cup\{\tilde a^{i}: i\in\mathbb N\}\cup\{\tilde c\}.$$ The family $\mathfrak B$ satisfies the condition of Lemma~\ref{Lem1.6}. Consequently there is an infinite subsequence $\tilde r'=(r_{n_k})_{k\in\mathbb N}$ the scaling sequence $\tilde r$ for which the family $$\mathfrak B'=\{(b_{n_k}^{i})_{k\in\mathbb N}: i\in\mathbb N\}\cup\{(a_{n_k}^{i})_{k\in\mathbb N}: i\in\mathbb N\}\cup\{(c_{n_k})_{k\in\mathbb N}\}$$ is self-stable w.r.t. $\tilde r'$. Completing $\mathfrak B'$ to a maximal self-stable family and passing to the metric identification, we obtain the desired pretangent space $^{1}\Omega_{p,\tilde\mu}^{X}$ with $\tilde\mu=\tilde r'.$ The second statement of the lemma follows directly from statement $(i_2)$ of Proposition~\ref{prop}.
\end{proof}
\noindent{\bf Proof of Theorem~\ref{th4.4.1}.} Lemma~\ref{l4.4.4} implies the equalities
\begin{equation}\label{4.4.21}
R^{*}(^{\textbf{1}}\mathbf{\Omega_{p}^{X}})= R^{*}(\mathbf{\Omega_{p}^{X}(n)})\quad\mbox{and}\quad
R_{*}(\mathbf{\Omega_{p}^{X}(n)})=R_{*}(^{\textbf{1}}\mathbf{\Omega_{p}^{X}}).
\end{equation}
Now Theorem~\ref{th4.4.1} follows from Theorem~\ref{th4.4.3} and \eqref{4.4.21}.$\qquad\qquad\quad\quad\quad\quad\quad\quad\blacksquare$

\medskip

The sum of theorems~\ref{th4.4.1} and \ref{th3.10} yields the following result.

\begin{theorem}\label{th4.4.5}
Let $(X,d,p)$ be a metric space with a marked point $p\in ac X.$ Then the following three conditions are equivalent.
\item[\rm(i)]\textit{$\mathbf{\Omega_{p}^{X}(n)}$ is uniformly bounded.}
\item[\rm(ii)]\textit{$\mathbf{\Omega_{p}^{X}(n)}$ is uniformly discrete.}
\item[\rm(iii)]\textit{$S_{p}(X)\in \textbf{CSP}.$ }

Moreover if $\mathbf{\Omega_{p}^{X}(n)}$ is uniformly bounded, then
\begin{equation*}
R^{*}(\mathbf{\Omega_{p}^{X}(n)})=M(\tilde L)\quad\mbox{and}\quad
R_{*}(\mathbf{\Omega_{p}^{X}(n)})=\frac{1}{M(\tilde L)}
\end{equation*} where the quantity $M(\tilde L)$ was defined by \eqref{L13}.
\end{theorem}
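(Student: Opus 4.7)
The plan is to derive Theorem~\ref{th4.4.5} as a direct synthesis of Theorems~\ref{th3.10} and~\ref{th4.4.1}. The only substantive preliminary point is to verify that the hypothesis $p\in ac X$ ensures $\mathbf{\Omega_{p}^{X}(n)}\ne\varnothing$, so that Theorem~\ref{th4.4.1} is applicable without an extra nonemptiness assumption.

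First I would establish nonemptiness. Since $p\in ac X$, one can choose a sequence $(x_n)_{n\in\mathbb N}$ of pairwise distinct points of $X\setminus\{p\}$ with $d(x_n,p)\to 0$; passing to a subsequence if necessary, we may arrange for $(d(x_n,p))_{n\in\mathbb N}$ to be strictly decreasing. Setting $r_n:=d(x_n,p)$ yields an eventually decreasing scaling sequence $\tilde r$ that satisfies $d(x_n,p)/r_n\equiv 1$, so $\tilde r$ is normal in the sense of Definition~\ref{D4.2}. Proposition~\ref{Pr1.2} then supplies a maximal self-stable family containing $\tilde x$ and $\tilde p$, whose metric identification is an element of $\mathbf{\Omega_{p}^{X}(n)}$.

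Next I would simply read off the equivalences and the closed-form expressions. The implication $(\mathrm{i})\Leftrightarrow(\mathrm{iii})$ together with the identity $R^{*}(\mathbf{\Omega_{p}^{X}(n)})=M(\tilde L)$, for any universal $\tilde L\in\tilde I_{E}^{d}$ with $E=S_{p}(X)$, is exactly the content of Theorem~\ref{th3.10}. The equivalence $(\mathrm{i})\Leftrightarrow(\mathrm{ii})$ together with the reciprocal relation $R_{*}(\mathbf{\Omega_{p}^{X}(n)})=1/R^{*}(\mathbf{\Omega_{p}^{X}(n)})$ is Theorem~\ref{th4.4.1}, whose hypothesis $\mathbf{\Omega_{p}^{X}(n)}\ne\varnothing$ is now confirmed. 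Combining the two identities yields $R_{*}(\mathbf{\Omega_{p}^{X}(n)})=1/M(\tilde L)$, completing the moreover-clause.

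There is no real obstacle: the content of Theorem~\ref{th4.4.5} is a repackaging of two already proved theorems, and the only subtlety is the nonemptiness check, which the standing assumption $p\in ac X$ makes routine. If one wanted a perfectly self-contained write-up, the main care point would be to state explicitly that the nonemptiness of $\mathbf{\Omega_{p}^{X}(n)}$ (and hence $R^{*}(\mathbf{\Omega_{p}^{X}(n)})>0$) is what allows the reciprocal formula of Theorem~\ref{th4.4.1} to be inverted into a finite positive value $1/M(\tilde L)$ without any division-by-zero or infinity issue.
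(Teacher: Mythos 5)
Your proposal is correct and follows essentially the same route as the paper, which simply states that Theorem~\ref{th4.4.5} is the ``sum'' of Theorems~\ref{th3.10} and~\ref{th4.4.1}. Your explicit verification that $p\in ac\, X$ yields $\mathbf{\Omega_{p}^{X}(n)}\ne\varnothing$ (so that Theorem~\ref{th4.4.1} applies) is a worthwhile detail the paper leaves implicit, but it does not change the argument.
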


The following proposition is an analog of Proposition~\ref{Pr3.13}.
\begin{proposition}\label{Pr4.7}
Let $(X, d, p)$ be a pointed metric space and let $\mathbf{\Omega_{p}^{X}(n)}\ne\varnothing.$
If the family $\mathbf{\Omega_{p}^{X}(n)}$ is uniformly descrete, then there is $\Omega_{p, \tilde r}^{X}\in\mathbf{\Omega_{p}^{X}(n)}$ such that the equality
\begin{equation*}
R_{*}(\mathbf{\Omega_{p}^{X}(n)})=\rho_{*}(\Omega_{p, \tilde r}^{X})
\end{equation*} holds.
\end{proposition}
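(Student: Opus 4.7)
The plan is to mirror the construction from the proof of Proposition~\ref{Pr3.13}, but with the roles of the left and right endpoints of the components of $\mathrm{Ext}\,E$ interchanged, so that the small quantity $1/M(\tilde L)$ emerges as a distance rather than the large quantity $M(\tilde L)$. By Theorem~\ref{th4.4.1}, uniform discreteness of $\mathbf{\Omega_{p}^{X}(n)}$ is equivalent to uniform boundedness, and Theorem~\ref{th4.4.5} then yields $E:=S_{p}(X)\in\textbf{\emph{CSP}}$ together with $R_{*}(\mathbf{\Omega_{p}^{X}(n)})=1/M(\tilde L)$ for a universal $\tilde L=\{(l_n,m_n)\}_{n\in\mathbb N}\in\tilde I_{E}^{d}$ with $M(\tilde L)<\infty$. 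I would begin by observing that the eventual monotonicity of $(l_n)_{n\in\mathbb N}$ together with the pairwise disjointness of the components forces $m_{n+1}\le l_n$ for large $n$, so that $M(\tilde L)\ge 1$ and $1/M(\tilde L)\in(0,1]$ is strictly positive; this nondegeneracy is what will make the candidate distance legitimate.

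Next, I would extract a subsequence $(n(k))_{k\in\mathbb N}$ realizing $\lim_{k\to\infty}l_{n(k)}/m_{n(k)+1}=M(\tilde L)$, use the fact that $l_{n(k)},m_{n(k)+1}\in\overline{E}$ to select $s_k,t_k\in E$ with $s_k/l_{n(k)}\to 1$ and $t_k/m_{n(k)+1}\to 1$, and set $r_k:=l_{n(k)}$. Since $E=S_{p}(X)$, I can pick $x_k,y_k\in X$ with $d(x_k,p)=s_k$ and $d(y_k,p)=t_k$. Then $d(x_k,p)/r_k\to 1$, which certifies that the eventually decreasing sequence $\tilde r=(r_k)_{k\in\mathbb N}$ is normal, while
\[
\frac{d(y_k,p)}{r_k}=\frac{t_k}{m_{n(k)+1}}\cdot\frac{m_{n(k)+1}}{l_{n(k)}}\longrightarrow\frac{1}{M(\tilde L)}.
\]

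Finally, the triangle inequality gives $d(x_k,y_k)/r_k\le d(x_k,p)/r_k+d(y_k,p)/r_k$, which is bounded, so after passing to a further subsequence the triple $\{\tilde x,\tilde y,\tilde p\}$ becomes mutually stable with respect to $\tilde r$; Zorn's lemma then extends it to a maximal self-stable family whose metric identification is the required pretangent space $\Omega_{p,\tilde r}^{X}\in\mathbf{\Omega_{p}^{X}(n)}$. Writing $\alpha=\pi(\tilde p)$ and $\beta=\pi(\tilde y)$, one has $\rho(\alpha,\beta)=1/M(\tilde L)>0$, so $\beta\ne\alpha$ and $\rho_{*}(\Omega_{p,\tilde r}^{X})\le\rho(\alpha,\beta)=R_{*}(\mathbf{\Omega_{p}^{X}(n)})$; the reverse inequality is immediate from the definition of $R_{*}$ as an infimum, and the two together close the argument. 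I expect the main obstacle to be the same bookkeeping difficulty as in Proposition~\ref{Pr3.13}: producing a single subsequence along which $\tilde r$ is simultaneously a valid normal scaling sequence and the ratios $d(y_k,p)/r_k$ converge to $1/M(\tilde L)$. This is handled precisely by exploiting the memberships $l_{n(k)},m_{n(k)+1}\in\overline{E}$ together with the identity $E=S_{p}(X)$ to realize the abstract endpoints as honest distances in $X$.
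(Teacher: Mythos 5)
Your proof is correct and follows essentially the paper's intended route: the paper itself only remarks that the argument is ``completely similar to the proof of Proposition~\ref{Pr3.13}'', and your construction is exactly that mirror image, with $r_k=l_{n(k)}$ in place of $m_{n(k)+1}$ so that the ratio $d(y_k,p)/r_k$ converges to $1/M(\tilde L)=R_{*}(\mathbf{\Omega_{p}^{X}(n)})$ rather than to $M(\tilde L)$. The two auxiliary facts you invoke (endpoints of components of $Ext\,E$ lie in $\overline{E}$, and $M(\tilde L)\ge 1$, which also follows from Lemma~\ref{Lem2.15} since $C(\tilde\tau)\ge 1$) are sound, so no gap remains.
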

A proof is completely similar to the proof of Proposition~\ref{Pr3.13}.

\section {Metric space valued derivatives on uniformly bounded pretangent spaces}
\hspace*{\parindent}
Let $(X_i, d_i, p_i)$ be pointed metric spaces and let $\tilde r_i=\{r_{n}^{(i)}\}_{n\in\mathbb N}$ be scaling sequences, $i=1, 2.$  For a function $f: X_1\to X_2$ define the mapping $\tilde f:\tilde X_1\to\tilde X_2$ as
\begin{equation*}
\tilde f(\tilde x)=(f(x_1), f(x_2), ..., f(x_n), f(x_{n+1}), ...)
\end{equation*}
if $\tilde x=(x_i)_{i\in \mathbb N}\in\tilde X_1.$ Let us consider two maximal self-stable families $\tilde X_{p_1, \tilde r_1}^{1}\subseteq\tilde X_1$ and $\tilde X_{p_2, \tilde r_2}^{2}\subseteq\tilde X_2.$
\begin{definition}\label{diff}
A function $f: X_1\to X_2$ is differentiable w.r.t. the pair $\left(\tilde X_{p_1, \tilde r_1}^{1}, \tilde X_{p_2, \tilde r_2}^{2}\right)$ if the following conditions are satisfied:
\item[\rm(i)]\textit{$\tilde f(\tilde x)\in\tilde X_{p_2, \tilde r_2}^{2}$ for every $\tilde x\in\tilde X_{p_1, \tilde r_1}^{1}$;}
\item[\rm(ii)]\textit{The implication $(\tilde d_{\tilde r_1}(\tilde x, \tilde y)=0)\Rightarrow (\tilde d_{\tilde r_2}(\tilde f(\tilde x), \tilde f(\tilde y))=0)$ is true for all $\tilde x, \tilde y\in \tilde X_{p_1, \tilde r_1}^{1}$, where
    \begin{equation*}
    \tilde d_{\tilde r_1}(\tilde x, \tilde y)=\lim_{n\to\infty}\frac{d_{1}(x_n, y_n)}{r_{n}^{(1)}},
    \end{equation*}
    \begin{equation*}
    \tilde d_{\tilde r_2}(\tilde f(\tilde x), \tilde f(\tilde y))=\lim_{n\to\infty}\frac{d_{2}(f(x_n), f(y_n))}{r_{n}^{(2)}}.
    \end{equation*}}
\end{definition}

\begin{remark}
Note that condition (i) of Definition~\ref{diff} implies the equality $f(p_1)=p_2.$
\end{remark}

Let $\pi_{X_i}:\tilde X_{p_i, \tilde r_i}^{i}\to\Omega_{p_i, \tilde r_i}$, $i=1, 2,$ be natural projections. (See diagram~\eqref{eq1.5}).

\begin{definition}(\cite{dov})
A function $D^{*}f:\Omega_{p_1, \tilde r_1}\to\Omega_{p_2, \tilde r_2}$ is a metric space valued derivative of $f: X_1\to X_2$ at the point $p_1\in X_1$ if $f$ is differentiable w.r.t. $\left(\tilde X_{p_1, \tilde r_1}^{1}, \tilde X_{p_2, \tilde r_2}^{2}\right)$ and the diagram
\begin{equation}
\begin{array}{ccc}
\tilde X_{p_1, \tilde r_{1}}^{1} & \xrightarrow{\ \ \quad\tilde f\quad \ \ } &
\tilde X_{p_2, \tilde r_{2}}^{2} \\
\!\! \!\! \!\! \!\! \! \pi_{X_1}\Bigg\downarrow &  & \! \!\Bigg\downarrow \pi_{X_2}
\\
\Omega_{p_1, \tilde r_1} & \xrightarrow{\ \ \quad D^{*}f \quad\ \ } & \Omega_{p_2, \tilde
r_{2}}
\end{array}
\end{equation}
is commutative.
\end{definition}

The existence of $D^{*}f$ for differentiable w.r.t. $(\tilde X_{p_1, \tilde r_1}^{1}, \tilde X_{p_2, \tilde r_2}^{2})$ functions $f$ directly follows from Definition~\ref{diff}.

Let $(X, d)$ be a metric space and $Y$ be a set. Recall that a function $f: X\to Y$ is a locally constant at a point $p\in X$ if there is a neighborhood $U$ of $p$ such that $f$ is constant on $U.$ Write
\begin{equation}\label{eq5.2}
c_p(f):=
\begin{cases}
\inf\{d(x, p): f(x)\ne f(p)\}\,\text{if}\, f\, \text{is not constant on}\, X\\
\infty \,\qquad\qquad\qquad\qquad\qquad\text{if}\, f\, \text{is constant on}\, X.\\
\end{cases}
\end{equation}
Then $f$ is locally constant at $p\in X$ if and only if $c_{p}(f)>0.$

In the following proposition the quantity $M(\tilde L)$ was defined by \eqref{L13}.
\begin{proposition}\label{Pr5.4}
Let $(X, d, p)$ be a pointed metric space with $\mathbf{\Omega_{p}^{X}(n)}\ne\varnothing.$ Then the following statements are equivalent.
\item[\rm(i)]\textit{$S_{p}(X)\in$ \textbf{CSP}.}
\item[\rm(ii)]\textit{There is a constant $c_{X}\in (0, \infty)$ such that if $Y$ is a pointed metric space with a marked point $a$, $\tilde X_{p, \tilde r}\subseteq\tilde X$ and $\tilde Y_{a, \tilde t}\subseteq\tilde Y$ are maximal self-stable families, and $\Omega_{p, \tilde r}^{X}=\pi_{X}(\tilde X_{p, \tilde r})\in \mathbf{\Omega_{p}^{X}(n)}$ and $\Omega_{a, \tilde t}^{Y}=\pi_{Y}(\tilde Y_{a, \tilde t})\in \mathbf{\Omega_{a}^{Y}}$, and $f: X\to Y$ is differentiable w.r.t. $(\tilde X_{p, \tilde r}, \tilde Y_{a, \tilde t})$, then the derivative $D^{*}f: \Omega_{p, \tilde r}^{X}\to \Omega_{a, \tilde t}^{Y}$ satisfies the inequality
    \begin{equation}\label{eq5.3}
    c_{\alpha}(D^{*}f)\ge c_{X},
    \end{equation} where $\alpha=\pi_{X}(\tilde p).$
Moreover, if condition \emph{(i)} holds, then we can take $c_{X}=\frac{1}{M(\tilde L)}$ and this bound is sharp for inequality \eqref{eq5.3}}.
\end{proposition}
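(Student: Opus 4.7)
The plan is to reduce everything to Theorem~\ref{th4.4.5}, which already identifies the sharp quantity $R_*(\mathbf{\Omega_{p}^{X}(n)}) = 1/M(\tilde L)$ when $S_p(X) \in \textbf{CSP}$. The crucial observation is that any metric space valued derivative $D^*f$ sends the marked point $\alpha = \pi_X(\tilde p)$ to the marked point of the target pretangent space: by the Remark following Definition~\ref{diff}, $f(p) = a$, so $\tilde f(\tilde p)$ is the constant sequence $(a, a, \ldots)$, and commutativity of the defining diagram gives $D^*f(\alpha) = \pi_Y(\tilde f(\tilde p)) = \beta$, where $\beta$ denotes the marked point of $\Omega_{a, \tilde t}^Y$. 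Consequently any $\gamma \in \Omega_{p, \tilde r}^X$ contributing to the infimum defining $c_\alpha(D^*f)$ is automatically distinct from $\alpha$, which lets me control its distance to $\alpha$ by $\rho_*(\Omega_{p, \tilde r}^X)$.

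For (i) $\Rightarrow$ (ii), I would assume $S_p(X) \in \textbf{CSP}$ and invoke Theorem~\ref{th4.4.5} to obtain uniform discreteness with $R_*(\mathbf{\Omega_{p}^{X}(n)}) = 1/M(\tilde L)$. If $D^*f$ is constant, then \eqref{eq5.3} is immediate. Otherwise, for any $\gamma$ with $D^*f(\gamma) \ne D^*f(\alpha)$ the preceding paragraph forces $\gamma \ne \alpha$, so $\rho(\alpha, \gamma) \ge \rho_*(\Omega_{p, \tilde r}^X) \ge R_*(\mathbf{\Omega_{p}^{X}(n)}) = 1/M(\tilde L)$; taking the infimum over such $\gamma$ yields \eqref{eq5.3} with $c_X = 1/M(\tilde L)$.

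For (ii) $\Rightarrow$ (i), I would test the hypothesis on the identity map $\mathrm{id}_X: X \to X$, which is trivially differentiable with respect to any pair $(\tilde X_{p, \tilde r}, \tilde X_{p, \tilde r})$ of identical maximal self-stable families and whose derivative is the identity on $\Omega_{p, \tilde r}^X$; thus $c_\alpha(D^*\mathrm{id}_X) = \rho_*(\Omega_{p, \tilde r}^X)$. Applying (ii) across all $\Omega_{p, \tilde r}^X \in \mathbf{\Omega_{p}^{X}(n)}$ forces $R_*(\mathbf{\Omega_{p}^{X}(n)}) \ge c_X > 0$, so Theorem~\ref{th4.4.5} yields $S_p(X) \in \textbf{CSP}$. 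Sharpness of $c_X = 1/M(\tilde L)$ is then witnessed by Proposition~\ref{Pr4.7}, which supplies an $\Omega_{p, \tilde r}^X \in \mathbf{\Omega_{p}^{X}(n)}$ realizing $\rho_*(\Omega_{p, \tilde r}^X) = 1/M(\tilde L)$; on this space the identity achieves equality in \eqref{eq5.3}. I do not foresee any real obstacle beyond the bookkeeping on definitions; the only place demanding care is the verification that $D^*f(\alpha) = \beta$, which is what allows the uniform discreteness bound to translate into a bound on $c_\alpha(D^*f)$.
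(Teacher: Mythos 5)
Your proof is correct and follows essentially the same route as the paper: Theorem~\ref{th4.4.5} for the equivalence with uniform discreteness, the identity map as the test function for (ii)$\Rightarrow$(i), and Proposition~\ref{Pr4.7} for sharpness. The only cosmetic difference is that your ``crucial observation'' that $D^{*}f(\alpha)=\beta$ is not actually needed: any $\gamma$ with $D^{*}f(\gamma)\ne D^{*}f(\alpha)$ is trivially distinct from $\alpha$, so the bound $c_{\alpha}(g)\ge R_{*}(\mathbf{\Omega_{p}^{X}(n)})$ holds for \emph{every} function $g$ on a space of $\mathbf{\Omega_{p}^{X}(n)}$, which is exactly how the paper phrases it.
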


\begin{proof}
Let (i)  holds. Then, by Theorem~\ref{th4.4.5}, $\mathbf{\Omega_{p}^{X}(n)}$ is uniformly discrete and
\begin{equation*}
R_{*}(\mathbf{\Omega_{p}^{X}(n)})=\frac{1}{M(\tilde L)}>0.
\end{equation*}
Hence, for $r\in\left(0, \frac{1}{M(\tilde L)}\right),$ the open ball $B_{r}(\alpha)$ of radius $r$ centered at $\alpha=\pi_{X}(\tilde p)$ contains the point $\alpha$ only. Consequently, for every function $g$ defined on $\Omega_{p, \tilde r}^{X}\in\mathbf{\Omega_{p}^{X}(n)}$ we have $$c_{\alpha}(g)\ge\frac{1}{M(\tilde L)}.$$
The implication (i)$\Rightarrow$(ii) follows. It is also shown that inequality \eqref{eq5.3} holds with $c_{X}=\frac{1}{M(\tilde L)}$ for every $D^{*}(f):\Omega_{p, \tilde r}^{X}\to\Omega_{a, \tilde t}^{Y}$ if statement (i) is valid. Suppose now that (ii) holds. By Theorem~\ref{th4.4.5}, statement (i) holds if and only if the family $\mathbf{\Omega_{p}^{X}(n)}$ is uniformly discrete. Let $\tilde X_{p, \tilde r}$ be a maximal self-stable family and let $\Omega_{p, \tilde r}^{X}=\pi_{X}(\tilde X_{p, \tilde r})\in\mathbf{\Omega_{p}^{X}(n)}.$ Let us consider the identical mappings
\begin{equation*}
id_{X}: X\rightarrow X\quad\text{and}\quad id_{\Omega}:\Omega_{p, \tilde r}^{X}\rightarrow\Omega_{p, \tilde r}^{X}.
\end{equation*}
It is easy to show that $id_{X}$ is differentiable w.r.t. $(\tilde X_{p, \tilde r}, \tilde X_{p, \tilde r})$ with the metric space value derivative $D^{*}(id_X)=id_{\Omega}.$ By condition (ii), we have
$$c_{\alpha}(id_{\Omega})\ge c_{X}.$$
Moreover, from the definitions of $c_{\alpha}$ and $\rho_{*}$ it follows that
\begin{equation}\label{eq5.4}
c_{\alpha}(id_{\Omega})=\rho_{*}(\Omega_{p, \tilde r}^{X})\quad\text{with}\quad\alpha=\pi_{X}(\tilde p).
\end{equation}
Consequently, the inequality
\begin{equation}\label{eq5.5}
R_{*}(\mathbf{\Omega_{p}^{X}(n)})\ge c_{X}
\end{equation}
holds. Since $c_{X}>0,$ inequality \eqref{eq5.5} implies that $\mathbf{\Omega_{p}^{X}(n)}$ is uniformly discrete.
To complete the proof, it suffices to show that $c_{X}=\frac{1}{M(\tilde L)}$ is the best possible bound in \eqref{eq5.3}. Indeed, by Proposition~\ref{Pr4.7}, there is $^{1}\Omega_{p, \tilde r}^{X}\in\mathbf{\Omega_{p}^{X}(n)}$ such that
\begin{equation}\label{eq5.6}
\rho_{*}(^{1}\Omega_{p, \tilde r}^{X})=R_{*}(\mathbf{\Omega_{p}^{X}(n)}).
\end{equation}
From Theorem~\ref{th4.4.5} it follows that
\begin{equation}\label{eq5.7}
R_{*}(\mathbf{\Omega_{p}^{X}(n)})=\frac{1}{M(\tilde L)}.
\end{equation}
Now using \eqref{eq5.4}, \eqref{eq5.6} and \eqref{eq5.7}, we obtain
$$c_{\alpha}(id_{\Omega})=\frac{1}{M(\tilde L)},$$
where $id_{\Omega}$ is the identical mapping of $^{1}\Omega_{p, \tilde r}^{X}.$ The bound $c_X=\frac{1}{M(\tilde L)}$ is sharp.
\end{proof}
\begin{remark}
Recently, Dmytro Dordovskyi proved that pretangent spaces at regular points of $k$-dimensional parametric manifolds in $\mathbb R^{n}$ are isometric to $\mathbb R^{k}$ \cite{dd}. Using this result, we can show that some metric space valued derivatives of mappings between such manifolds can be identified with linear mappings between Euclidean spaces. Every locally constant linear mapping is identically zero, so Proposition~\ref{Pr5.4} gives us conditions under which every metrically differentiable function undergoes a ``local vanishing'' of metric derivative.
\end{remark}

\end{document}